\numberwithin{equation}{section}
\newcommand{\ds}{\displaystyle}
\newcommand{\ol}{\overline}
\newcommand{\ul}{\underline}
\newcommand{\vl}{\;\vert\;}
\newcommand{\Z}{\mathbb{Z}}
\newcommand{\C}{\mathbb{C}}
\newcommand{\wh}{\widehat}
\newcommand{\m}{\mathfrak{m}}
\newcommand{\x}{\mathbf{x}}
\newcommand{\Tor}{\mathrm{Tor}}
\newcommand{\gr}[2]{\mathrm{gr}_{#1} #2}
\newcommand{\ann}{\mathrm{ann}}
\newcommand{\im}{\mathrm{im}}
\newcommand{\CM}{Cohen-Macaulay\ }
\newcommand{\CMp}{Cohen-Macaulay}
\newcommand{\ls}{\leqslant}
\newcommand{\gs}{\geqslant}
\newcommand{\SuS}{super-stretched\ }
\newcommand{\SuSp}{super-stretched}
\newcommand{\lst}[2]{#1_1,\dots,#1_{#2}}
\newcommand{\MCM}[1]{\mathfrak{MCM}(#1)}
\newcommand{\MCMgr}[1]{\mathfrak{MCM}^{\mathrm{gr}}(#1)}
\newcommand{\Mod}[1]{\mathfrak{mod}(#1)}
\newcommand{\Modgr}[1]{\mathfrak{mod}^{\mathrm{gr}}(#1)}
\renewcommand{\mod}[1]{\ (\mathrm{mod}\ #1 )}
\newcommand{\dfn}[1]{\textsf{#1}\index{#1}}
\theoremstyle{plain} 
\newtheorem{thm}{Theorem}[section]
\newtheorem{lem}[thm]{Lemma}
\newtheorem{prop}[thm]{Proposition}
\newtheorem{cor}[thm]{Corollary}
\newtheorem{conj}[thm]{Conjecture}
\newtheorem*{conj*}{Conjecture}
\newtheorem{quest}[thm]{Question}
\theoremstyle{definition}
\newtheorem{defn}[thm]{Definition}
\newtheorem{example}[thm]{Example}  
\newtheorem*{def:gradedSS}{Definition \ref{def:gradedSS}}
\theoremstyle{remark}  
\newtheorem{rem}[thm]{Remark}
\newtheorem*{observ*}{Observation}
\newtheorem*{claim*}{Claim}
\newtheoremstyle{editorialNotes}
	{10mm} 
	{10mm}
	{\slshape}
	{-80pt}
	{\bfseries}
	{}
	{10mm}
	{}
\theoremstyle{editorialNotes}
\author[B. Stone]{Branden Stone}
\thanks{The author was partially funded by the NSF grant, Kansas Partnership for Graduate Fellows in K-12 Education (DGE-0742523).}
\title{Super-Stretched and Graded Countable Cohen-Macaulay Type}
\address{Branden Stone, Mathematics Program, Bard College/Bard Prison Initiative, P.O. Box 5000, Annandale-on-Hudson, NY 12504}
\email{bstone@bard.edu}
\begin{document}

\maketitle 
	
\begin{abstract}
	We define what it means for a \CM ring to to be \SuS and show that \CM rings of graded countable \CM type are \SuSp.  We use this result to show that rings of graded countable \CM type, and positive dimension, have possible $h$-vectors $(1)$, $(1,n)$, or $(1,n,1)$. Further, one dimensional standard graded Gorenstein rings of graded countable type are shown to be hypersurfaces; this result is not known in higher dimensions.  In the non-Gorenstein case, rings of graded countable \CM type of dimension larger than 2 are shown to be of minimal multiplicity.
\end{abstract}


\section{Introduction}\label{sec:prelim-def}

	A ring $R$ is said to be \dfn{standard graded} if, as an abelian group, it has a decomposition $R = \bigoplus_{i \gs 0} R_i$ such that $R_iR_j \subseteq R_{i+j}$ for all $i,j\gs 0$, $R = R_0[R_1]$, and $R_0$ is a field.  Further, we will always assume that a standard graded ring is Noetherian. Unless otherwise stated, we will denote by $(R,\m,k)$ the standard graded ring with $\m$ being the irrelevant maximal ideal, that is, $\m = \sum_{i =1}^\infty R_i$, and $k := R_0 = R/\m$ being an uncountable field.  A standard graded \CM ring $(R,\m,k)$ has \dfn{graded finite \CM type} (respectively, \dfn{graded countable \CM type}) if it has only finitely (respectively, countably) many indecomposable, maximal \CM modules up to a shift in degree. 
	
	The study of rings with finite and countable \CM type were studied extensively by M. Auslander and I. Reiten \cite{auslander86, auslander89, auslander89b} and an early survey paper was given by F.-O. Schreyer \cite{schreyer87}. Since then, more extensive surveys  have been published \cite{yoshino90, leuschke} detailing many interesting tools and results.  In a sequence of two papers, H. Kn\"orrer, R.-O. Buchweitz, G.-M. Greuel, and F.-O Schreyer \cite{knorrer87, buchweitz87} showed that if $R$ is a complete hypersurface containing an algebraically closed field $k$ (of characteristic different from 2), then $R$ is of finite \CM type if and only if $R$ is the local ring of a simple hypersurface singularity in the sense of \cite{arnold74}.  For example, if we let $k = \C$, then $R$ is one of the complete ADE singularities over $\C$.  That is, $R$ is isomorphic to $k\llbracket x, y, z_2,\ldots, z_d\rrbracket/( f )$, where $f$ is one of the following polynomials:
	\begin{align*}
	(A_n): &\  x^{n+1}+y^2+z_2^2+\cdots+z_d^2 ,\  n \gs 1; \\
	(D_n): &\  x^{n-1}+xy^2+z_2^2+\cdots+z_d^2 , \ n \gs 4; \\
	(E_6): &\  x^4+y^3+z_2^2+\cdots+z_d^2; \\
	(E_7): &\  x^3y+y^3+z_2^2+\cdots+z_d^2; \\
	(E_8): &\  x^5+y^3+z_2^2+\cdots+z_d^2.
	\end{align*}
In the countable case, it was further shown \cite{buchweitz87} that a complete hypersurface singularity over an algebraically closed uncountable field $k$ (with characteristic different from 2) has (infinite) countable \CM type if and only if it is isomorphic to one of the following
	\begin{align}
		(A_\infty): &\ k\llbracket x, y, z_2,\ldots, z_d\rrbracket /( y^2+z_2^2+\cdots+z_d^2 ); \label{eq1}\\ 
		(D_\infty): &\ k\llbracket x, y, z_2,\ldots, z_d\rrbracket /( xy^2+z_2^2+\cdots+z_d^2 ). \label{eq2}
	\end{align}	
Thus a complete characterization of hypersurfaces with countable \CM type is given (here we allow countable type to include finite type). 

	It has long been known that Gorenstein rings of finite \CM type are hypersurfaces (for a proof see \cite{herzog78}).  Combining this knowledge with the results above allows for a further characterization of Gorenstein rings of finite \CM type.  Given this outcome, there is a natural folklore conjecture: 
\begin{conj}\label{conj:gor}
	A Gorenstein ring of countable \CM representation type is a hypersurface.
\end{conj}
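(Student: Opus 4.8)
The plan is to prove the conjecture in the standard graded setting of this paper and then to indicate what is missing in general. Let $(R,\m,k)$ be standard graded and Gorenstein, with $k$ uncountable, of graded countable \CM type; we may assume $d := \dim R \gs 1$, the Artinian case being treated separately. The first step is numerical: by the main result of this paper $R$ is \SuSp, so its $h$-vector is $(1)$, $(1,n)$, or $(1,n,1)$. Since $R$ is Gorenstein its $h$-vector is symmetric, which excludes $(1,n)$ for $n \gs 2$; hence the $h$-vector of $R$ is $(1)$, $(1,1)$, or $(1,n,1)$ for some $n \gs 1$.

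The second step disposes of the small cases. An $h$-vector of $(1)$ means $e(R) = 1$, so $R$ is regular; $(1,1)$ and $(1,1,1)$ mean $e(R) = 2$ or $3$, which for a Gorenstein ring of positive dimension forces a hypersurface of degree $2$ or $3$ by the classification of \CM rings of multiplicity at most three. Thus, in the graded case, Conjecture \ref{conj:gor} is equivalent to the assertion that \emph{no standard graded Gorenstein ring with $h$-vector $(1,n,1)$ and $n \gs 2$ has graded countable \CM type}.

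The third step proves this assertion when $d = 1$; this is the content obtained here. Reduce modulo a general linear nonzerodivisor $\ell$ to the Artinian Gorenstein algebra $A := R/\ell R$, whose $h$-vector is again $(1,n,1)$, so $\m_A^3 = 0$ and $A$ has embedding dimension $n \gs 2$; by Macaulay duality $A \cong k[x_1,\dots,x_n]/\ann(Q)$ for a nondegenerate quadratic form $Q$. Such an algebra is never of finite representation type — Kronecker-type one-parameter families of indecomposables already appear when $n = 2$ — so over the uncountable field $k$ it carries uncountably many pairwise non-isomorphic indecomposables. One then transports enough of these families through the deformation $R \to A$ to produce uncountably many indecomposable torsion-free (hence maximal \CM) $R$-modules, using that in dimension one the category of maximal \CM $R$-modules is concrete enough to control indecomposability and isomorphism classes directly. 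This contradicts graded countable \CM type, forcing $n = 1$, and the remaining $h$-vector $(1,1,1)$ has already been dealt with.

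The hard part — and the reason for the dimension-one restriction — is everything beyond this. The \SuS machinery is genuinely graded, so to attack a complete local Gorenstein ring one would first need a substitute bounding the Hilbert function of $\gr{\m}{R}$ together with a descent statement, such as: graded countable \CM type is inherited by $\gr{\m}{R}$, or countable \CM type survives a general hyperplane section — neither of which is known. And even granting a reduction to an $h$-vector of the shape $(1,n,1)$, eliminating $n \gs 2$ in dimension $\gs 2$ is exactly the open point: the lifting argument of the third step has no apparent higher-dimensional analogue, and while countable \CM type is known to force $\dim \Sing{R} \ls 1$, this has not so far sufficed. Accordingly, what I would claim is the one-dimensional standard graded case, leaving Conjecture \ref{conj:gor} open as stated.
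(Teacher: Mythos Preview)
Your first two steps coincide with the paper's setup: the \SuS theorem gives $h$-vectors $(1)$, $(1,n)$, $(1,n,1)$, and Gorenstein symmetry reduces to ruling out $(1,n,1)$ with $n\gs 2$. Your honest acknowledgement that only the one-dimensional graded case is within reach also matches the paper exactly.

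Where you diverge is in the third step, and there your argument has a genuine gap. You pass to the Artinian quotient $A=R/\ell R$, invoke Macaulay duality and Kronecker-type families to get uncountably many indecomposable $A$-modules, and then propose to ``transport enough of these families through the deformation $R\to A$'' to obtain uncountably many indecomposable MCM $R$-modules. That transport is the entire content of the step, and you do not supply it. There is no automatic lifting of indecomposable $A$-modules to MCM $R$-modules, and even when lifts exist, showing that non-isomorphic $A$-modules lift to non-isomorphic $R$-modules is precisely the delicate point. The phrase ``concrete enough to control indecomposability and isomorphism classes directly'' names the problem rather than solving it.

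The paper avoids this lifting altogether. Its Theorem~\ref{thm:1n1} works directly in $R$: choosing a degree-one nonzerodivisor $x$ and elements $a,b\in\m$ linearly independent modulo $x$ with $a^2\notin x\m$ or $ab\notin x\m$ (which exist exactly when $R$ is neither a hypersurface nor of minimal multiplicity), it forms the ideals $I_\alpha=(x,a+\alpha b)$ and shows by an explicit degree-zero computation that $I_\alpha\simeq I_\beta$ in $\MCMgr R$ forces a polynomial relation $1+\gamma(\alpha+\beta)+\sigma\alpha\beta=0$ with fixed $\gamma,\sigma$, hence only countably many isomorphism classes can coincide --- contradiction. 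The Gorenstein hypothesis is only used afterward, in Corollary~\ref{cor:1n1gor}, to exclude the minimal-multiplicity branch via the $h$-vector. So the paper's argument is both more elementary (no representation theory of Artinian algebras, no lifting) and slightly stronger (it yields a dichotomy for all one-dimensional \CM rings of graded countable type, not just Gorenstein ones). If you want to rescue your route, you would need to exhibit the lifts explicitly --- and the natural candidates are exactly the ideals $I_\alpha$ the paper writes down.
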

\noindent In Corollary \ref{cor:1n1gor}, we show that Conjecture \ref{conj:gor} is true in dimension one for a standard graded ring. 

The finite counterpart of Conjecture \ref{conj:gor} was of crucial importance in the classification of rings of graded finite \CM type.  In \cite{eisenbud88}, D. Eisenbud and J. Herzog completely classified the standard graded \CM rings of graded finite representation type in the category of graded maximal \CM modules over a ring $R$ and degree-preserving homomorphisms.  In doing this, they show that such rings are stretched as introduced by J. Sally \cite{sally79} (see Definition \ref{def:graded-stretched}).  
	
	 In this manuscript, the stronger notion of \SuS (see Definition \ref{def:gradedSS}) is developed and we are able to extend D. Eisenbud and J. Herzog's result to rings of graded countable type.  In particular, we show that standard graded \CM rings of graded countable \CM type are \SuSp.  This in turn forces the $h$-vectors to be of the form $(1)$, $(1,n)$, or $(1,n,1)$ for some $n$.  Similar observations were recently shown to be true in the context of wild representation type \cite{tovpyha12}.

	For a $d$-dimensional standard graded \CM ring $(R,\m,k)$, let $e(R)$ be the multiplicity of $\m$.  If $e(R)= \dim_k(\m/\m^2) - \dim R + 1 $ then $R$ is said to have \dfn{minimal multiplicity}.  In the case when $k$ is infinite, the following are equivalent:
		\begin{itemize}\label{min-mult-equiv}
			\item $R$ has minimal multiplicity;
			\item there exists a homogeneous regular sequence $x_1,\ldots,x_d$ such that the degree of $x_i$ is one and $\m^2 = (x_1,\ldots,x_d)\m$; 
			\item the $h$-vector of $R$ is of the form $(1,n)$.
		\end{itemize}
An interesting byproduct of the classification of D. Eisenbud and J. Herzog is that standard graded rings of dimension $d \gs 2$ with graded finite \CM type have minimal multiplicity. 
The authors ask if this result carries over to complete local rings in general.  While this question remains open, in Proposition \ref{prop:dim3-min-mult}, we are able to show that non-Gorenstein standard graded rings of dimension $d \gs 3$, with graded countable \CM type, have minimal multiplicity as well.

\subsection{Preliminaries}

Recall that the \dfn{associated graded ring}, $\gr{I}{R}$, is defined by
		\[
			\gr{I}{R} := R/I\ \oplus\ I/I^2\ \oplus\ I^2/I^3 \oplus\ \cdots.
		\]
For a standard graded ring $(R,\m,k)$, we have $R \simeq \gr \m R$. The \dfn{Hilbert function}, $h_R(n)$, is the vector space dimension of the $n^{th}$ summand of $\gr{\m}{R}$; that is, 
		\[
			h_R(n) := \dim_k (\m^n/\m^{n+1}).
		\]
	If $(R,\m,k)$ is a $d$-dimensional standard graded ring with infinite residue field $k$, then D. Northcott and D. Rees \cite{northcott54} showed there exists $\lst x d$ in $\m$ such that $\m^{n+1} = (\lst x d)\m^n$.  The sequence $\lst x d$ is called a \dfn{minimal reduction} of $\m$.  As $R$ is a standard graded ring, the minimal reduction of $\m$ consists of homogeneous elements of degree 1. 
A fact that will be used constantly is that in a \CM ring, minimal reductions are regular sequences.  Factoring out such a reduction yields an Artinian ring $\ol R$, and thus there exists an $s$ such that for $n > s$, the Hilbert function $h_{\ol R}(n) = 0$ and $h_{\ol R}(s) \not= 0$.  The sequence of integers 
	\[
		(h_{\ol R}(0), h_{\ol R}(1), \ldots, h_{\ol R}(s))
	\]
is called the \dfn{$h$-vector}\label{def:hvector} of $R$ and denote it by $h(R)$.  In general, the Hilbert function defines the \dfn{Hilbert series} $H_R (t) := \sum h_R (i) t^i$. This series can be represented as a rational function 
	\[
		H_R (t) = \frac{f(t)}{(1-t)^{d}},
	\]
where $f(t)\in \Z [t]$ and $f(1) \neq 0$ whenever $d >0$.  

 	The following definition of stretched first was given in 1979 by J. Sally \cite{sally79}.

\begin{defn}\label{def:graded-stretched}
	Let $(R,\m,k)$ be a standard graded \CM ring of dimension $d$.  We say $R$ is \dfn{stretched} if there exists a homogeneous minimal reduction $\x = (\lst x d)$ of $\m$ such that 
	\[
		\dim_k \left( \frac{R}{\x} \right)_i \ls 1
	\]
for all $i \gs 2$.
\end{defn}

In particular, if a ring $R$ is stretched, then there exists a minimal reduction $(\lst x d)$ of the maximal ideal $\m$ such that the $h$-vector is $(1,a,1,1,\ldots,1)$.  Here, 
\[
	a = \dim_k\left(\frac{\m}{\m^2 +(\lst x d)}\right) = \dim_k\left(\frac{\m}{\m^2}\right) - d.
\]

We extend the definition above by considering the Hilbert series of \emph{any} homogeneous system of parameters.

	\begin{defn}\label{def:gradedSS}
		A standard graded \CM ring $(R,\m,k)$ of dimension $d$ is said to be \dfn{\SuS} if for all homogeneous systems of parameters $\lst{x}{d}$, 
		\begin{equation}\label{eqn:gradedSS}
			\dim_k\left(\frac{R}{(\lst{x}{d})}\right)_i \ls 1
		\end{equation}
	for all $i\gs \sum \deg(x_j) - d +2$.
	\end{defn}

	If a ring $R$ is \SuSp, then for any homogenous system of parameters $(\lst x d)$, the $h$-vector is $(1,a_1,a_2,\ldots, a_{D-1},1,\ldots,1)$.  Here, $D = \sum \deg(x_i) - d +2$ and
		\[
			a_j = \dim_k\left(\frac{\m^j+(\lst x d)}{\m^{j+1} +(\lst x d)}\right).
		\]	
\begin{rem}\label{rem:SS-is-streched}
	As it is, if a standard graded ring is \SuS with an infinite residue field, then it is also stretched. To see this, choose a homogeneous minimal reduction $\lst x d$ of degree one.  Then Equation \eqref{eqn:gradedSS} holds for all $i \gs \sum \deg(x_j) - d + 2 = 2$; i.e. $R$ is stretched. 
\end{rem}

		As we see in the next example, if a ring is stretched, it is not necessarily \SuSp.

	\begin{example}\label{ex:stretched-ss}
		The standard graded \CM ring $R = \C[x,y]/(x^3y-xy^3)$ is stretched but not \SuSp.  To see this, notice that $x+2y$ is a regular element.  As a vector space over $\C$,
		\[
			\dim_\C\left(\frac{R}{(x+2y)}\right)_2 = \  \dim_\C\left(\frac{R}{(x+2y)}\right)_3 = 1 
		\]
	and
	\[
		\dim_\C\left(\frac{R}{(x+2y)}\right)_i = 0
	\]
	for all $i\gs 4$.  In order for $R$ to be \SuSp, $\dim_\C R_i \ls 1$ for all 
	\[
		i \gs \deg( (x+2y)^2 ) -1 +2 = 3.
	\]
	However, going modulo $(x+2y)^2$ yields
			\[
				\dim_\C\left(\frac{R}{(x+2y)^2}\right)_3 = 2.
			\]
	and thus $R$ is not \SuSp.
	\end{example}

	\section{Graded Nomenclature}\label{ch2}\label{section:graded-case}

	For a graded module $M$ over a graded ring $R$, we denote the \dfn{shift of $M$ by $n$} by $M(n)_i = M_{i+n}$.  Further, we say two $R$-modules $M$ and $N$ are \dfn{isomorphic up to shift in degree} if there exists an integer $n$, and a homogeneous isomorphism of degree zero, such that $M \simeq N(n)$.

	All of the following results will be considered in the category of finitely generated graded $R$-modules. In particular, we want to work in a subcategory consisting of graded maximal \CM modules.  The objects of the desired subcategory are the obvious choices, graded maximal \CM modules.  However, there is a bit of ambiguity as to what the maps ought to be; should we consider them to be homogeneous of degree zero, or will any graded map suffice? We illustrate this point in Example \ref{ex:EHrank}. Throughout this section, we give a precise definition of what subcategory we will be working in.  We also explore alternate ways of defining rings of graded finite (and countable) \CM type.

	\begin{example}\label{ex:EHrank}
		Let $(R,\m,k)$ be a standard graded \CM ring.  Note that $R$ is a graded indecomposable maximal \CM module and consider the family of graded maximal \CM modules $\Lambda = \{R \oplus R(-i)\}_{i\in \Z_{>0}}$.  There does not exist a graded isomorphism of any degree between two distinct modules in $\Lambda$, and each module is of rank 2.  Thus we have an infinite family of non-isomorphic (with respect to the grading), maximal \CM modules of bounded rank.  If the maps in the category are defined to be graded of arbitrary degree, then $\Lambda$ is composed of one indecomposable maximal \CM module up to graded isomorphism. 
		On the other hand, if we only consider homogeneous degree zero maps, $\Lambda$ is then composed of infinitely many, non-isomorphic indecomposable maximal \CM modules.  However, up to isomorphism and shifts of degree, there is only one indecomposable maximal \CM module.  
	\end{example}

		Depending on our ring, Corollary \ref{cor:finite-type} and Proposition \ref{prop:countable-type} allow us to handle the nuances arising in Example \ref{ex:EHrank}.

		For a general Noetherian ring $R$, let \dfn{$\Mod R$} denote the category of finitely generated $R$-modules.  Here the objects are defined as finitely generated $R$-modules and the morphisms are $R$-module homomorphisms. The subcategory \dfn{$\MCM R$} is the category of maximal \CM modules whose morphisms are defined by $R$-module homomorphisms between maximal \CM modules.  If $R$ is a standard graded ring, we define a subcategory of $\Mod R$ that respects the grading.  In particular, we let \dfn{$\Modgr R$} be the category whose objects are finitely generated graded modules.  The morphisms of $\Modgr R$ are graded $R$-module homomorphisms of degree zero.  As with $\MCM R$, we define the subcategory of graded maximal \CM modules by \dfn{$\MCMgr R$} where the morphisms are graded degree zero $R$-module homomorphisms. 


Using our new notation, we define what is meant by a standard graded ring of graded \CM type.

	\begin{defn}
		A standard graded \CM ring $(R,\m,k)$ ring is said to have \dfn{graded finite \CM type} (respectively, \dfn{graded countable \CM type}) if it has only finitely (respectively, countably) many indecomposable modules in $\MCMgr R$ up to a shift in degree.
	\end{defn}

	\subsection{Graded Finite \CM Type}	It is worth pointing out that there are a few possible choices in the definition of graded finite \CM type of a standard graded ring $R$.  For example, one could use any of the following characterizations for graded finite \CM type:
		\begin{enumerate}
			\item[(A)] there are finitely many graded indecomposable modules up to isomorphism in $\MCMgr R$;
			\item[(B)] there are finitely many graded indecomposable modules up to isomorphism and shifts in degree in $\MCMgr R$;
			\item[(C)] there are finitely many graded indecomposable modules up to isomorphism in $\MCM R$;
			\item[(D)] there are finitely many indecomposable modules up to isomorphism in $\MCM{\wh R}$.
		\end{enumerate}
	Here and throughout the rest of this work, we denote the completion with respect to the $\m$-adic topology by $\wh *$.  As it turns out, using (A) as the definition would not be very helpful, since in general $\{R(n)\}$ is an infinite family of non-isomorphic graded indecomposable maximal \CM modules.  In short, only the zero ring would have graded finite \CM type.  Thus we can safely remove (A) from the list of possible definitions.  Since we have adopted (B) as the definition, the question is, how do (C) and (D) fit into the picture?  In Corollary \ref{cor:finite-type}, we see that (B), (C) and (D) are equivalent definitions, which follows as consequence of the work of M. Auslander and I. Reiten.  

	\begin{prop}[{\cite[Proposition 8 and 9]{auslander89}}]\label{prop:auslanderprop89}
		Let $A, B$ be objects in $\MCMgr R$ where $(R,\m,k)$ is a standard graded \CM ring. 
		\begin{enumerate}
			\item The graded module $A$ is indecomposable in $\MCMgr R$ if and only if $\wh A$ is indecomposable in $\MCM{\wh R}$;

			\item If $A$ and $B$ are indecomposable, then $\wh A \simeq \wh B$  in $\MCM{\wh R}$ if and only if there is some integer $n$ such that $A \simeq B(n)$ in $\MCMgr R$.  
		\end{enumerate}
	\end{prop}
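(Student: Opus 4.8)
The plan is to convert the statement into facts about $\Hom$- and endomorphism rings, using that $\m$-adic completion is flat and kills no nonzero \emph{graded} finitely generated module (graded Nakayama). First I would record the inputs. For $A,B\in\MCMgr{R}$ the module $\Hom_R(A,B)$ is graded and finitely generated, so each of its graded pieces is a finite-dimensional $k$-vector space; in particular $\Hom_R(A,B)_0$, the morphism set of $\MCMgr{R}$. Since $A$ is finitely presented and $\wh R$ is flat,
\[
\widehat{\Hom_R(A,B)}=\Hom_R(A,B)\otimes_R\wh R\cong\Hom_{\wh R}(\wh A,\wh B),
\]
so in particular $\widehat{\operatorname{End}_R(A)}=\operatorname{End}_{\wh R}(\wh A)$. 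Thus $\operatorname{End}_R(A)_0$ is a finite-dimensional $k$-algebra, hence semiperfect, while $\operatorname{End}_{\wh R}(\wh A)$ is module-finite over the complete local ring $\wh R$, hence semiperfect with $\m\wh R$ contained in its Jacobson radical.

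For part (1), the implication ``$\wh A$ indecomposable $\Rightarrow A$ graded indecomposable'' is immediate, since a nontrivial graded splitting $A=A_1\oplus A_2$ completes to a nontrivial splitting of $\wh A$. For the converse I would proceed as follows. Graded indecomposability of $A$ says that $\operatorname{End}_R(A)_0$ has no nontrivial idempotents, hence, being finite-dimensional, is local. Because $\m\wh R$ lies in the radical of $\operatorname{End}_{\wh R}(\wh A)=\widehat{\operatorname{End}_R(A)}$, its semisimple quotient is computed from $\operatorname{End}_R(A)\otimes_R k$, i.e.\ from a finite-dimensional $\Z$-graded $k$-algebra $\Lambda'$ whose degree-zero part is a quotient of the local ring $\operatorname{End}_R(A)_0$, hence is again local. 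I would then invoke the purely ring-theoretic fact that such a $\Lambda'$ has a \emph{division ring} as its semisimple quotient $S$: $\operatorname{rad}\Lambda'$ is a graded ideal, so $S$ is $\Z$-graded; idempotents lift modulo the nilpotent ideal $(\operatorname{rad}\Lambda')_0$, so $S_0$ again has no nontrivial idempotents; the central idempotents of $S$ lie in $S_0$; and a finite-dimensional $\Z$-graded simple $k$-algebra with connected degree-zero part must be a division ring concentrated in degree zero. Hence $\operatorname{End}_{\wh R}(\wh A)$ is local, i.e.\ $\wh A$ is indecomposable.

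For part (2), only ``$\wh A\cong\wh B\Rightarrow A\cong B(n)$ for some $n$'' needs an argument. By part (1), $\wh A$ is indecomposable over the complete local ring $\wh R$, so $\wh E:=\operatorname{End}_{\wh R}(\wh A)$ is local with residue division ring $D$; fixing an isomorphism $\wh A\xrightarrow{\ \sim\ }\wh B$ identifies $\Hom_{\wh R}(\wh A,\wh B)$ with $\wh E$, viewed as a free module of rank one over itself, so an element is an isomorphism exactly when its image in $D$ is nonzero. Since $\m\wh R$ lies in the radical, this image factors through $\Hom_{\wh R}(\wh A,\wh B)\otimes_{\wh R}k=\Hom_R(A,B)\otimes_R k$, a nonzero finite-dimensional $\Z$-graded $k$-vector space (nonzero because it surjects onto $D$), in which the non-isomorphisms are the preimage of a proper $k$-subspace that need not be graded. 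Since $\Hom_R(A,B)\otimes_R k$ is the direct sum of its graded pieces, some piece in some degree $n$ is not contained in that subspace; lifting a representative to a homogeneous map $g\colon A\to B(n)$, its completion is an isomorphism, so $g$ itself is (its kernel and cokernel are graded, finitely generated, and killed by completion), whence $A\cong B(n)$.

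The step I expect to be the real obstacle is the ring-theoretic claim inside part (1): upgrading ``no nontrivial idempotents in degree zero'' to ``local after completion'', equivalently ruling out that a graded-indecomposable module becomes isotypic of multiplicity $\geq 2$ after completing. Everything else is bookkeeping with graded Nakayama, flat base change for $\Hom$, and the semiperfectness observations.
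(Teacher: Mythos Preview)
The paper does not supply a proof of this proposition; it is quoted from Auslander--Reiten \cite{auslander89} and used as a black box, so there is no in-paper argument to compare your attempt against.

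On its own merits your outline is correct, and it is in the spirit of the original Auslander--Reiten argument. The ring-theoretic claim you flag in part~(1) is indeed the crux, and it does go through. With $S=\Lambda'/\operatorname{rad}\Lambda'$ (the radical of an Artinian $\Z$-graded algebra is graded), the center $Z(S)$ is a graded product of fields, hence reduced; a top-and-bottom-degree computation on $e=e^2$ then forces every central idempotent into degree~$0$, so $S$ is simple once $S_0$ is connected. Graded Artin--Wedderburn writes $S\cong\mathrm{End}_D\bigl(\bigoplus_j D(a_j)\bigr)$ for a finite-dimensional $\Z$-graded division algebra $D$; such a $D$ is concentrated in degree~$0$ (a homogeneous unit of nonzero degree would have infinitely many nonzero powers), whence $S_0\cong\prod_s M_{m_s}(D)$ according to the multiplicities among the shifts $a_j$. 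Your hypothesis that $S_0$ has no nontrivial idempotents forces a single $1\times 1$ block, so $S=D$ and $\mathrm{End}_{\wh R}(\wh A)$ is local. Your part~(2) is fine as written; the one point worth spelling out is that $\Hom_R(A,B)\otimes_R k$ really surjects onto $D$, which holds because $\Hom_{\wh R}(\wh A,\wh B)/\m\,\Hom_{\wh R}(\wh A,\wh B)=\Hom_R(A,B)\otimes_R k$ and the fixed isomorphism $\wh A\cong\wh B$ identifies this with $\wh E/\m\wh E$.
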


	\begin{cor}\label{cor:newGFCMT}
		Let $(R,\m,k)$ be a standard graded \CM ring and $M,N$ be indecomposable objects in $\MCMgr R$. Then, $M \simeq N$ in $\MCM R$ if an only if there is some integer $n$ such that $M \simeq N(n)$ in $\MCMgr R$.
	\end{cor}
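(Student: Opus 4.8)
The plan is to deduce both implications of Corollary~\ref{cor:newGFCMT} from Proposition~\ref{prop:auslanderprop89} by transporting the question up to the $\m$-adic completion $\wh R$.

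The backward implication should be immediate: if $M \simeq N(n)$ in $\MCMgr R$ for some integer $n$, then, forgetting the grading and noting that $N(n)$ and $N$ have the same underlying $R$-module, one obtains an $R$-module isomorphism $M \simeq N$, i.e. $M \simeq N$ in $\MCM R$. This direction uses nothing about indecomposability and is just a matter of unwinding the definition of a shift.

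For the forward implication, assume $M \simeq N$ in $\MCM R$, so there is an $R$-module isomorphism between them which need not be homogeneous. First I would apply the completion functor $\wh{\cdot}$ (at the irrelevant maximal ideal $\m$) to get an $\wh R$-module isomorphism $\wh M \simeq \wh N$, after recording the routine point that $\wh M$ and $\wh N$ lie in $\MCM{\wh R}$ — this holds because $\m$-adic completion is exact on finitely generated modules, preserves depth, and $\dim \wh R = \dim R$. Next I would invoke Proposition~\ref{prop:auslanderprop89}(1): since $M$ and $N$ are indecomposable in $\MCMgr R$ by hypothesis, $\wh M$ and $\wh N$ are indecomposable in $\MCM{\wh R}$. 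Finally, Proposition~\ref{prop:auslanderprop89}(2) applied to the pair of indecomposables with $\wh M \simeq \wh N$ yields an integer $n$ such that $M \simeq N(n)$ in $\MCMgr R$, which is exactly the desired conclusion.

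I do not expect a substantial obstacle: all of the representation-theoretic content is already packaged in Proposition~\ref{prop:auslanderprop89} (the Auslander--Reiten result), and the only things left to check are the standard facts that completion carries $\MCMgr R$ into $\MCM{\wh R}$ and preserves isomorphisms. If any subtlety were to appear, it would be in confirming that indecomposability is genuinely required — which it is, precisely so that part (2) of the cited proposition applies in the forward direction — but this is not a real difficulty.
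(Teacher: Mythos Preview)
Your proposal is correct and follows essentially the same route as the paper: pass to the completion and invoke Proposition~\ref{prop:auslanderprop89}(2). One minor remark: your appeal to part~(1) is not actually needed, since part~(2) only requires $M$ and $N$ to be indecomposable in $\MCMgr R$, which is already a hypothesis; and your treatment of the backward implication (forgetting the grading) is in fact more direct than the paper's one-line reference to faithful flatness.
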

	\begin{proof}
		This follows from the fact that completion is faithfully flat and Proposition \ref{prop:auslanderprop89} part (2).
	\end{proof}

		Another immediate corollary of Proposition \ref{prop:auslanderprop89} is the fact that the \CM type of the completion ``bounds'' the graded \CM type.
	\begin{cor}\label{cor:completion-graded}
		Let $(R,\m,k)$ be a standard graded \CM ring and $\wh R$ the $\m$-adic completion.  If $\wh R$ is of finite (respectively countable) \CM type, then $R$ is of graded finite (respectively graded countable) \CM type.
	\end{cor}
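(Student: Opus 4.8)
The plan is to transfer the counting problem from graded indecomposable maximal \CM $R$-modules to indecomposable maximal \CM $\wh R$-modules by way of Proposition \ref{prop:auslanderprop89}. First I would fix a complete set $\{M_\lambda\}_{\lambda \in \Lambda}$ of representatives for the isomorphism classes, up to shift in degree, of the indecomposable objects of $\MCMgr R$; by definition, $R$ has graded finite (respectively, graded countable) \CM type precisely when the index set $\Lambda$ is finite (respectively, countable). Since completion at $\m$ preserves depth and Krull dimension for finitely generated modules, each $\wh{M_\lambda}$ is a maximal \CM $\wh R$-module, and by part (1) of Proposition \ref{prop:auslanderprop89} it is indecomposable in $\MCM{\wh R}$.

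Next I would show that the assignment sending $\lambda$ to the isomorphism class of $\wh{M_\lambda}$ defines an injection of $\Lambda$ into the set of isomorphism classes of indecomposable objects of $\MCM{\wh R}$. Indeed, if $\wh{M_\lambda} \simeq \wh{M_\mu}$ in $\MCM{\wh R}$, then part (2) of Proposition \ref{prop:auslanderprop89} produces an integer $n$ with $M_\lambda \simeq M_\mu(n)$ in $\MCMgr R$, and the choice of $\{M_\lambda\}$ as a set of representatives up to shift forces $\lambda = \mu$. Finally, since $\wh R$ is assumed to be of finite (respectively, countable) \CM type, the target of this injection is a finite (respectively, countable) set, hence so is $\Lambda$; this is exactly the claim that $R$ is of graded finite (respectively, graded countable) \CM type.

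There is no genuine obstacle here: the only point requiring a little care is the bookkeeping that completion carries graded maximal \CM $R$-modules into $\MCM{\wh R}$ and that the equivalence relation ``isomorphic up to shift in degree'' on the $R$-side matches plain isomorphism on the $\wh R$-side, and both of these are supplied verbatim by Proposition \ref{prop:auslanderprop89}. In effect, the corollary is just the cardinality reformulation of that proposition.
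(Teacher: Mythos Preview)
Your argument is correct and is exactly the approach the paper intends: it states the corollary as an immediate consequence of Proposition \ref{prop:auslanderprop89}, and you have simply written out the cardinality bookkeeping (completion gives a well-defined, injective map from shift-classes of graded indecomposable MCM $R$-modules to isomorphism classes of indecomposable MCM $\wh R$-modules) that the paper leaves implicit.
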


	The next proposition shows the equivalence of (B) and (C) for rings of finite \CM type.

	\begin{prop}\label{prop:finite-type}
		Let $(R,\m,k)$ be a standard graded \CM ring.  Then (B) and (C) are equivalent statements. In particular, either statement could be used as the definition of graded finite \CM type.
	\end{prop}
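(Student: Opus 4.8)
The plan is to show that statements (B) and (C) count the same family of modules under two equivalence relations that coincide, so that one family is finite precisely when the other is. All the non‑formal input is already supplied by Proposition \ref{prop:auslanderprop89} and Corollary \ref{cor:newGFCMT}; the rest is bookkeeping.

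First I would verify that, for a graded maximal \CM module $A$, being indecomposable in $\MCMgr R$ is the same as being indecomposable in $\MCM R$, so that (B) and (C) really do range over the same class of modules. One direction is immediate: a nontrivial decomposition in $\MCMgr R$ is in particular a nontrivial decomposition in $\MCM R$. For the converse, suppose $A$ is indecomposable in $\MCMgr R$. By Proposition \ref{prop:auslanderprop89}(1), $\wh A$ is indecomposable in $\MCM{\wh R}$; if $A \simeq A_1 \oplus A_2$ were a nontrivial ungraded decomposition, then $\wh A \simeq \wh{A_1} \oplus \wh{A_2}$, and faithful flatness of completion gives $\wh{A_i} \ne 0$, contradicting indecomposability of $\wh A$.

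Next I would compare the two equivalence relations on this common class. Statement (B) identifies $M$ and $N$ when $M \simeq N(n)$ in $\MCMgr R$ for some integer $n$, while (C) identifies them when $M \simeq N$ in $\MCM R$. By Corollary \ref{cor:newGFCMT}, for indecomposable objects of $\MCMgr R$ these two relations agree. Hence the classes counted by (B) are in canonical bijection with the classes counted by (C), so one is finite exactly when the other is; this yields (B) $\Leftrightarrow$ (C), and together with the earlier remark that (A) is too strong, either (B) or (C) can serve as the definition of graded finite \CM type. There is essentially no obstacle remaining once Proposition \ref{prop:auslanderprop89} is available; the one point that genuinely needs it — rather than a naive argument — is the implication ``graded‑indecomposable $\Rightarrow$ indecomposable'', since an ungraded idempotent endomorphism of a graded‑indecomposable module need not be homogeneous, and this is circumvented by passing to the complete local ring $\wh R$, where Krull--Schmidt holds.
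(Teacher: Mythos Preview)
Your proof is correct and follows essentially the same route as the paper's: both reduce to Corollary \ref{cor:newGFCMT}, which shows that shift-isomorphism in $\MCMgr R$ and isomorphism in $\MCM R$ coincide on graded indecomposables, so the two counts agree. Your preliminary verification that graded-indecomposable equals ungraded-indecomposable is an extra step not in the paper (which reads ``graded indecomposable modules'' as the fixed class of objects for both (B) and (C), varying only the equivalence), but it is correct and only sharpens the argument.
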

	\begin{proof}
		To see that (B) implies (C), notice that condition (C) has more isomorphisms in each class of indecomposable maximal \CM modules than there are in each class satisfying condition (B).  Thus, if (B) holds true, then (C) must be also be fulfilled.

		It is left to show that (C) implies (B).  By contradiction, assume there are infinitely many graded indecomposable modules in $\MCMgr R$ up to shifts in degree.  We let $\{M_\alpha\}_{\alpha \in \Lambda}$ be a family of representatives, one from each isomorphism class.  Let $\alpha,\beta \in \Lambda$ and assume that $M_\alpha \simeq M_\beta$ in $\MCM R$.  By Corollary \ref{cor:newGFCMT}, there exists an $n$ such that $M_\alpha \simeq M_\beta(n)$ in $\MCMgr R$.  In other words, $M_\alpha$ and $M_\beta$ are in the same isomorphism class up to shift.  Therefore we must have that $\alpha = \beta$. 
	\end{proof}

	When considering indecomposable maximal \CM $R$-modules $M,N$ in $\MCMgr R$, if there is an isomorphism between $M$ and $N$, then Corollary \ref{cor:newGFCMT} says there exists a graded isomorphism between the two modules.  Another nice result is that the ``finiteness'' of $\MCMgr R$ and $\MCM{\wh R}$ are the same.  

	\begin{thm}[{\cite[Theorem 5]{auslander89}}]\label{thm:fcmt-complete}
		Let $(R,\m,k)$ be a standard graded \CM ring and $\wh R$ the completion with respect to the maximal ideal $\m$. Then $R$ is of graded finite \CM type if and only if $\wh R$ is of finite \CM type.
	\end{thm}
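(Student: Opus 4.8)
The plan is to prove the two implications separately. For ``$\wh R$ of finite \CM type implies $R$ of graded finite \CM type'' I would invoke the finite case of Corollary \ref{cor:completion-graded}: by Proposition \ref{prop:auslanderprop89}(1) completion carries a graded indecomposable object of $\MCMgr R$ to an indecomposable object of $\MCM{\wh R}$, and by part~(2) two such objects have isomorphic completions only if they agree up to a shift, so $M\mapsto\wh M$ induces an injection from the set of graded indecomposable maximal \CM $R$-modules modulo shift into the set of indecomposable maximal \CM $\wh R$-modules modulo isomorphism, whence finiteness of the target forces finiteness of the source.

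For the converse, suppose $R$ has graded finite \CM type, and let $M_1,\dots,M_t$ represent the graded indecomposable objects of $\MCMgr R$ up to shift. By Proposition \ref{prop:auslanderprop89} the completions $\wh M_1,\dots,\wh M_t$ are pairwise non-isomorphic indecomposable objects of $\MCM{\wh R}$; since the Krull--Schmidt theorem holds over the complete ring $\wh R$, it suffices to show that every indecomposable maximal \CM $\wh R$-module is isomorphic to one of the $\wh M_i$, i.e.\ that completion is essentially surjective onto $\MCM{\wh R}$ once the grading is forgotten. I would first record, as the graded analogue of Auslander's theorem that a \CM ring of finite \CM type has an isolated singularity, that graded finite \CM type forces $R_\p$ to be regular for every homogeneous prime $\p\neq\m$; since the non-regular locus is a homogeneous closed subset meeting the homogeneous primes only in $\m$, it equals $\{\m\}$, so $R_\p$ is regular for \emph{all} $\p\neq\m$, and because a standard graded algebra over a field is excellent, the regularity of its formal fibres carries this over to $\wh R$.

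The heart of the matter is the essential surjectivity, which I would extract from a lifting statement: over an excellent \CM local ring $R$ with isolated singularity, every maximal \CM $\wh R$-module is extended from $R$. I would deduce this from Elkik's theorem --- or, equivalently, from Artin approximation together with Popescu's theorem that the regular homomorphism $R\to\wh R$ is a filtered colimit of smooth $R$-algebras: a finite free presentation of a maximal \CM $\wh R$-module $N$ is a $\wh R$-point of an affine scheme whose points parametrize presentations of maximal \CM modules with $N$'s Betti numbers, the isolated-singularity hypothesis makes the deformation problem of $N$ smooth (the obstruction modules $\Ext^i_{\wh R}(N,N)$, $i\gs 1$, have finite length since $N$ is free on the punctured spectrum), and so the $\wh R$-point is approximated by an $R$-point, producing a maximal \CM $R$-module $M$ with $\wh M\cong N$; carrying out the approximation equivariantly for the $\mathbb G_m$-action attached to the grading lets one take $M$ graded. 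Decomposing $M$ into graded indecomposables and completing, the indecomposability of $N$ then forces $N\cong\wh M_i$ for some $i$, which is what we want.

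I expect the lifting step to be the main obstacle: descent of maximal \CM modules from $\wh R$ to $R$ genuinely fails when the singularity is not isolated, and even after the isolated-singularity reduction it is a substantive input from approximation theory, while making the descent respect the grading requires the equivariant refinement of those theorems. An alternative to the approximation argument is an induction along almost split sequences: the set $\{\wh M_i\}$ contains $\wh R$ and, since the completion of a graded almost split sequence is an almost split sequence over $\wh R$, is closed under passing to the remaining terms of almost split sequences, hence is a union of connected components of the Auslander--Reiten quiver of $\MCM{\wh R}$ --- but then one needs that quiver to be connected, which is the same difficulty in a different guise. The routine verifications (that graded finite \CM type yields the isolated-singularity condition, and that excellence transports it to $\wh R$) are nonetheless indispensable.
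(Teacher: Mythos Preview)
The paper does not supply a proof: the statement is quoted as \cite[Theorem~5]{auslander89} and used as a black box, so there is no in-text argument to compare against, only Auslander--Reiten's original.

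Your direction ``$\wh R$ of finite type $\Rightarrow$ $R$ of graded finite type'' is correct and coincides with Corollary~\ref{cor:completion-graded}. For the harder direction you lead with approximation theory and relegate the almost-split-sequence argument to an ``alternative,'' but Auslander--Reiten's own proof is much closer to the latter: from graded finite type they first deduce an isolated singularity (their Proposition~4, also cited in the present paper just after Corollary~\ref{cor:countable-type}), which guarantees almost split sequences in $\MCMgr R$; completion carries these to almost split sequences in $\MCM{\wh R}$, and from this they conclude that the finitely many $\wh M_i$ exhaust the indecomposables over $\wh R$. They do not need to assume in advance that the AR quiver of $\wh R$ is connected --- that is an output, not an input --- so your worry that the quiver argument merely relocates the difficulty is unfounded.

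Your approximation route is a legitimate alternative and versions of it appear elsewhere in the literature (see, e.g., \cite{leuschke}), but you have correctly located its sore point: Artin--Popescu approximation over the excellent local ring $R_\m$ produces an MCM $R_\m$-module, and upgrading this to a \emph{graded} $R$-module is not automatic. Invoking an equivariant form of approximation for the $\mathbb G_m$-action is plausible, but it is itself a substantial input that would need to be stated and cited precisely rather than left as a parenthetical; Auslander--Reiten sidestep the issue entirely by working inside the graded category throughout.
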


	With Theorem \ref{thm:fcmt-complete} in hand, we are able to combine it with Proposition \ref{prop:finite-type} to obtain the following immediate corollary.

	\begin{cor}\label{cor:finite-type}
		Let $(R,\m,k)$ be a standard graded \CM ring and $\wh R$ the completion with respect to the maximal ideal $\m$.  Then conditions (B), (C), and (D) are equivalent.  
	\end{cor}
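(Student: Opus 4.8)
The plan is to assemble the statement directly from the two results just proved, with no new argument required. First I would set up the dictionary between the combinatorial phrasings (B), (C), (D) and the terminology used in the preceding results: condition (B) is, by our adopted convention, exactly the assertion that $R$ has graded finite \CM type, while condition (D) is exactly the assertion that $\wh R$ has finite \CM type. Under this translation, Theorem \ref{thm:fcmt-complete} reads precisely as the equivalence of (B) and (D).

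Next I would invoke Proposition \ref{prop:finite-type}, which establishes that (B) and (C) are equivalent for an arbitrary standard graded \CM ring $(R,\m,k)$. Chaining the two equivalences gives
\[
	\text{(C)} \iff \text{(B)} \iff \text{(D)},
\]
so that (B), (C), and (D) are mutually equivalent, which is the claim.

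Since both ingredients are already in hand, there is no genuine obstacle in the argument; the only point demanding any care is making the translation above explicit, so that the hypotheses of Proposition \ref{prop:finite-type} and Theorem \ref{thm:fcmt-complete} are seen to line up with conditions (B), (C), (D) as stated. Once that bookkeeping is recorded, the corollary follows formally, and it is reasonable to present the proof in a single short sentence citing the two results.
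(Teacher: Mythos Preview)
Your proposal is correct and matches the paper's own approach exactly: the paper states the corollary as an immediate consequence of combining Proposition~\ref{prop:finite-type} (giving (B)\,$\Leftrightarrow$\,(C)) with Theorem~\ref{thm:fcmt-complete} (giving (B)\,$\Leftrightarrow$\,(D)). Your explicit translation of (B) and (D) into the terminology of those results is the only bookkeeping involved, and the paper likewise treats this as a one-line deduction.
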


	\subsection{Graded Countable \CM Type}	As with graded finite \CM type, there are a few possible choices for the definition of graded countable \CM type of a standard graded ring $R$.  As described above, we could use any of the following for the definition:
		\begin{enumerate}\label{def:gct-choices}
			\item[(A')] there are countably many graded indecomposable modules up to isomorphism in $\MCMgr R$;
			\item[(B')] there are countably many graded indecomposable modules up to isomorphism and shifts in degree in $\MCMgr R$;
			\item[(C')] there are countably many graded indecomposable modules up to isomorphism in $\MCM R$;
			\item[(D')] there are countably many indecomposable modules up to isomorphism in the category $\MCM{\wh R}$.		
		\end{enumerate}
	Unlike the finite case, using (A') as the definition has potential. Notice that condition (A') is just removing the shifts and only allowing degree zero homomorphism between the modules.  By removing the shifts, we are only adding up to countably many new isomorphism classes with condition (A'). Hence (A') does not really add anything new.  In Proposition \ref{prop:countable-type}, we see that conditions (A'), (B'), and (C') are equivalent.  Further, Corollary \ref{cor:countable-type} describes the relation of (D') with the other statements.  

	\begin{prop}\label{prop:countable-type}
		Let $(R,\m,k)$ be a standard graded \CM ring.  Then (A'), (B'), and (C') are equivalent statements. In particular, any of the statements could be used as the definition of graded countable \CM type.
	\end{prop}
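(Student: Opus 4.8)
The plan is to notice that (A$'$), (B$'$), and (C$'$) all concern the same collection of objects --- the graded indecomposable modules of $\MCMgr R$ --- and differ only in the equivalence relation used to form isomorphism classes: graded isomorphism for (A$'$), graded isomorphism together with a shift in degree for (B$'$), and ungraded isomorphism in $\MCM R$ for (C$'$). Before comparing the relations I would record the small bookkeeping point that a graded module is indecomposable in $\MCMgr R$ if and only if it is indecomposable in $\MCM R$, so that ``graded indecomposable module'' is unambiguous here: a graded direct--sum splitting forgets to an ungraded one, and an ungraded splitting of a graded--indecomposable module is ruled out by completing and applying Proposition \ref{prop:auslanderprop89} part (1). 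It then suffices to compare the three equivalence relations on this one fixed set.

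First I would treat (A$'$) $\Leftrightarrow$ (B$'$). Since graded isomorphism refines graded isomorphism up to shift, the number of (B$'$)-classes is at most the number of (A$'$)-classes, so (A$'$) implies (B$'$). Conversely, the graded--isomorphism classes contained in the shift--class of a fixed graded indecomposable $M$ are precisely the classes of the modules $M(n)$, $n \in \Z$, so there are at most countably many of them; a countable union of countable sets is countable, so (B$'$) implies (A$'$). This is the analogue of the counting already used in Proposition \ref{prop:finite-type} and of the discussion of (A$'$) preceding this statement.

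Next I would treat (B$'$) $\Leftrightarrow$ (C$'$), which is where the one nontrivial input enters. By Corollary \ref{cor:newGFCMT}, for graded indecomposable modules $M$ and $N$ in $\MCMgr R$ one has $M \simeq N$ in $\MCM R$ if and only if $M \simeq N(n)$ in $\MCMgr R$ for some integer $n$; that is, $M$ and $N$ are isomorphic in $\MCM R$ exactly when they lie in the same shift--class. Hence the partition of the set of graded indecomposables by ungraded isomorphism coincides with its partition into shift--classes, so the two counts are literally equal and (B$'$) $\Leftrightarrow$ (C$'$) follows at once. Combining this with the previous equivalence proves the proposition, and in particular any of the three may be used as the definition.

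I do not expect a genuine obstacle: the only substantive ingredient is Corollary \ref{cor:newGFCMT}, itself a formal consequence of the faithful flatness of completion and of Auslander--Reiten's Proposition \ref{prop:auslanderprop89}, and everything else is elementary cardinal arithmetic. The single point that needs to be stated with care is the one flagged above --- that ``indecomposable as a graded module'' agrees with ``indecomposable as a module'' --- so that all three conditions are genuinely assertions about the same family of objects.
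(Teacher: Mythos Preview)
Your proof is correct and uses the same ingredients as the paper's: the countability of $\Z$ for (A$'$) $\Leftrightarrow$ (B$'$) and Corollary~\ref{cor:newGFCMT} for (B$'$) $\Leftrightarrow$ (C$'$). The only differences are cosmetic: you handle (B$'$) $\Leftrightarrow$ (C$'$) in one stroke by noting that the two equivalence relations on graded indecomposables literally coincide (the paper instead argues the two implications separately, one by contradiction), and you make explicit that graded and ungraded indecomposability agree for objects of $\MCMgr R$, a point the paper leaves implicit.
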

	\begin{proof}
		To show that (A') implies (B'), notice that by removing the shifts we are adding more isomorphism classes.  Therefore (B') follows.  A similar argument as in Proposition \ref{prop:finite-type} shows that (B') implies (C').

		To see that (B') implies (A'), assume by contradiction that there exists uncountably many graded indecomposable maximal \CM modules up to isomorphism in $\MCMgr R$. Let $\{ M_\alpha\}_{\alpha\in \Lambda}$ be a family of representatives, one from each indecomposable class.  Consider the isomorphism classes up to shifts in degrees.  That is for each $\alpha \in \Lambda$, there exists a subset $I \subseteq \Lambda$, with the property that for each $\beta\in I$, there exists an integer $n$ such that $M_\alpha \simeq M_\beta(n)$.  Let $\beta,\gamma \in I$ and assume that there exist an integer $n$ such that
		\[
			M_\gamma(n) \simeq M_\alpha \simeq M_\beta(n).
		\]
	As all of the isomorphisms above are degree zero, we have that $M_\beta = M_\gamma$ (i.e. $\beta = \gamma$).  Hence, when we include the shifts to our assumption, for each $\alpha \in \Lambda$ we only associate countably many indecomposables up to shifts.  Hence there are uncountably many graded indecomposable modules in $\MCMgr R$ that are not isomorphic up to shifts in degrees, a contradiction.

		It is left to show that (C') implies (B').  By contradiction, assume there are uncountably many graded indecomposable modules in $\MCMgr R$ up to shifts in degree.  Let $\{M_\alpha\}_{\alpha \in \Lambda}$ be an uncountable family of representatives from each isomorphism class.  We wish to form the isomorphism classes described in (C').  Let $\alpha,\beta \in \Lambda$ and assume that $M_\alpha \simeq M_\beta$ in $\MCM R$.  Thus by Corollary \ref{cor:newGFCMT}, there exists an $n$ such that $M_\alpha \simeq M_\beta(n)$ in $\MCMgr R$.  In other words, $M_\alpha$ and $M_\beta$ are in the same isomorphism class up to shift.  Therefore we must have that $\alpha = \beta$.  
	\end{proof}

	\begin{cor}\label{cor:countable-type}
		Let $(R,\m,k)$ be a standard graded \CM ring.  If condition (D') holds, then so do the statements (A'), (B'), and (C').
	\end{cor}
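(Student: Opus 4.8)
The plan is to use completion to embed the set of shift-classes of graded indecomposable maximal \CM modules over $R$ into the set of isomorphism classes of indecomposable maximal \CM modules over $\wh R$, and then invoke countability of the target. Since, by Proposition \ref{prop:countable-type}, the statements (A'), (B'), and (C') are equivalent, it suffices to verify any one of them, and I would aim for (B').

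In fact the quickest route is to observe that (D') is just a restatement of the hypothesis ``$\wh R$ is of countable \CM type'', so the conclusion is exactly Corollary \ref{cor:completion-graded} (the countable case), which gives that $R$ is of graded countable \CM type, i.e.\ (B'); the remaining conditions (A') and (C') then follow from Proposition \ref{prop:countable-type}. For a self-contained argument via Proposition \ref{prop:auslanderprop89}, I would proceed as follows. Choose a set $\{M_\alpha\}_{\alpha\in\Lambda}$ of representatives of the isomorphism classes \emph{up to shift in degree} of indecomposable objects of $\MCMgr R$. By Proposition \ref{prop:auslanderprop89}(1), each $\wh{M_\alpha}$ is indecomposable in $\MCM{\wh R}$. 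If $\wh{M_\alpha}\simeq\wh{M_\beta}$ in $\MCM{\wh R}$, then Proposition \ref{prop:auslanderprop89}(2) supplies an integer $n$ with $M_\alpha\simeq M_\beta(n)$ in $\MCMgr R$, whence $\alpha=\beta$ by the choice of representatives. Thus $\alpha\mapsto[\wh{M_\alpha}]$ is an injection of $\Lambda$ into the set of isomorphism classes of indecomposables in $\MCM{\wh R}$, which is countable by (D'). Hence $\Lambda$ is countable, establishing (B'), and (A') and (C') follow from Proposition \ref{prop:countable-type}.

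I do not anticipate a genuine obstacle: all the content is packaged in Proposition \ref{prop:auslanderprop89} (equivalently in Corollary \ref{cor:completion-graded}) together with the equivalences of Proposition \ref{prop:countable-type}. The only points requiring care are that the representatives must be taken up to shift so that the injectivity step closes, and that ``countable'' is understood to include the finite case, so that an injection from a possibly finite set into a countable set still yields the desired countability.
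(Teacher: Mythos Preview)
Your proposal is correct and matches the paper's own proof, which simply cites Corollary~\ref{cor:completion-graded} and Proposition~\ref{prop:countable-type}. Your additional self-contained argument via Proposition~\ref{prop:auslanderprop89} merely unpacks the proof of Corollary~\ref{cor:completion-graded}, so there is no substantive difference in approach.
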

	\begin{proof}
		This is a direct application of Corollary \ref{cor:completion-graded} and Proposition \ref{prop:countable-type}.
	\end{proof}

		In order to obtain the much desired converse to Corollary \ref{cor:countable-type}, we need a countable version of M. Auslander and I. Reiten's result in Theorem \ref{thm:fcmt-complete}. The proof of Theorem~\ref{thm:fcmt-complete} relies on the fact that rings of graded finite \CM type have an isolated singularity \cite[Proposition 4]{auslander89}.
As such, it is worth reconsidering the relation of condition (D') and conditions (A'), (B') and (C'), with the added assumption that the ring has an isolated singularity. With the extra hypothesis, it might be possible to extend Theorem~\ref{thm:fcmt-complete} to the countable case, and thus obtain a partial converse to Corollary~\ref{cor:countable-type}. We leave this as a question.

	\begin{quest}\label{thm:ccmt-complete}
		Let $(R,\m,k)$ be a standard graded \CM ring and $\wh R$ the completion with respect to the maximal ideal $\m$. If $R$ has an isolated singularity and is of countable graded \CM type, then is $\wh R$ of countable \CM type?
	\end{quest}

	\section{Super-Stretched Standard Graded Cohen-Macaulay Rings}

	The goal of this section is to show equivalent characterizations of \SuS (Theorem \ref{thm:SSequiv}). To do this, we need to build up the theory of \SuS standard graded rings with a few useful propositions. For convenience we recall the following definition.
	
	\begin{def:gradedSS}
		A standard graded \CM ring $(R,\m,k)$ of dimension $d$ is said to be \dfn{\SuS} if for all homogeneous systems of parameters $\lst{x}{d}$, 
		\begin{equation*}
			\dim_k\left(\frac{R}{(\lst{x}{d})}\right)_i \ls 1 \tag{\ref{eqn:gradedSS}}
		\end{equation*}
	for all $i\gs \sum \deg(x_j) - d +2$.
	\end{def:gradedSS}
	
 	Throughout this section, we will use the following notation.  Let $\lst y n$ and $\lst x m$ be sequences in a ring $R$ such that $(\lst y n) \subseteq (\lst x m)$. Let $A = (a_{ij})$ be the $n \times m$ matrix of elements in $R$ such that $y_i = \sum_{j=1}^m a_{ij}x_j$.  Set $\Delta = \det(A)$ and denote the containment of $(\lst y n)$ in $(\lst x m)$ by 
\[
	(\lst y n) \overset{A}{\subseteq} (\lst x m).
\]
Further, we define $(\lst y n)^{[t]} := (\lst{y^t} n)$. With this notation, we recall that for a standard graded ring $(R,\m,k)$ and  a homogeneous regular sequence $\lst x n$,
		\begin{equation}\label{lem:monConj}
			(\lst x n)^{[t]}:(x_1\cdots x_n)^{t-1} = (\lst x n).
		\end{equation}
for all $t \gs 1$.

	\begin{prop}\label{lem:sopMap}
		Let $(R,\m,k)$ be a standard graded \CM ring of dimension $d$.  If $(\lst y d ) \overset{A}{\subseteq} (\lst x d )$ are ideals, each generated by a homogeneous system of parameters, then the map $R/(\lst x d) \stackrel{\cdot\ \Delta\ }{\longrightarrow} R/(\lst y d)$ is injective.
	\end{prop}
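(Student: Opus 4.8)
The plan is to reduce to the Artinian case and use the colon formula \eqref{lem:monConj}. First I would pass to $\bar R = R/(\lst y d)$; since $(\lst y d)$ is a homogeneous system of parameters in a \CM ring, it is a regular sequence, so $\bar R$ is Artinian and the elements $\bar x_1,\dots,\bar x_d$ form a system of parameters for $\bar R$ (as $(\lst y d)\subseteq(\lst x d)$ means the $x$'s still generate an $\m$-primary ideal). The claim ``multiplication by $\Delta$ on $R/(\lst x d)\to R/(\lst y d)$ is injective'' becomes: the kernel of $\cdot\Delta\colon \bar R/(\bar x_1,\dots,\bar x_d)\to \bar R$ is zero, i.e. $(\lst y d):\Delta \subseteq (\lst x d)$ inside $R$. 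So the real content is the inclusion
\[
	(\lst y d) : \Delta \ \subseteq\ (\lst x d).
\]

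Next I would identify $\Delta$ with a product up to the ideal $(\lst y d)$. The adjugate identity $A\cdot\mathrm{adj}(A) = \Delta I_d$ applied to the relation $\mathbf y = A\mathbf x$ (as column vectors) gives $\Delta x_j \in (\lst y d)$ for each $j$; conversely $y_i \in (\lst x d)$. The key algebraic fact I want is a Cramer-type statement: if $r\Delta \in (\lst y d)$ then $r\Delta x_1\cdots x_d \in (\lst y d)^{[?]}$ in a way that lets me apply \eqref{lem:monConj}. More precisely, the cleanest route is to observe that $(\lst y d) \supseteq (\Delta x_1,\dots,\Delta x_d)$ is generated by a regular sequence of the same length $d$, and $\mathbf y = A\mathbf x$ with $\det A = \Delta$ shows that $(\lst y d)$ and $(\Delta x_1,\dots,\Delta x_d)$ have the same ``determinant'' up to a unit — indeed writing $\Delta x_j = \sum_i b_{ji} y_i$ via the adjugate, the composite matrix is $\Delta I_d$, so $(\lst y d)\overset{B}{\supseteq}(\Delta x_1,\dots,\Delta x_d)$ with $\det B = \Delta^{d-1}$ and $(\Delta x_1,\dots,\Delta x_d)\overset{A}{\supseteq}(\lst y d)$. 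Then the standard determinantal trick (cf. the proof that linked-ideal quotients are computed by determinants) yields $(\lst y d):\Delta = (\lst y d):\Delta \subseteq ((\Delta x_1,\dots,\Delta x_d):\Delta^{?})$, and after cancelling the regular element $\Delta$ appropriately one lands in $(\lst x d)$.

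Let me state the step I expect to carry the weight. Suppose $r\Delta \in (\lst y d)$. Using $y_i = \sum_j a_{ij}x_j$ and multiplying the expression $r\Delta = \sum_i s_i y_i$ through by $\mathrm{adj}(A)$, one gets $r\Delta \cdot (x_1\cdots x_d)$ lying in a power ideal to which \eqref{lem:monConj} applies — concretely, $r\Delta x_1\cdots x_d \in (\lst y d)^{[1]}$ is not enough, so instead I would argue directly: the vector $(rx_1,\dots,rx_d)^{T}$ satisfies $A\cdot(rx_1,\dots,rx_d)^T \equiv r\mathbf y \pmod{\text{nothing}}$... the genuinely clean version is to note $r\Delta x_j = r\cdot(\text{cofactor expansion})$ and expand the determinant of the matrix obtained from $A$ by replacing column $j$ with $(s_1,\dots,s_d)^T$ where $r\Delta = \sum s_i y_i$; since $\sum_i s_i y_i = r\Delta$ and $\sum_i s_i(\text{other entries})$ relate back to $x$'s, one gets $r x_j \in (\lst x d)$ — wait, that's automatic. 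So the actual mechanism: $r\Delta \in (\lst y d) \overset{A}{\subseteq}(\lst x d)$ forces, by regularity of $\lst x d$ together with the identity $\Delta x_j \in (\lst y d)$, that $r \in (\lst x d)$; this is exactly the linkage/Wiebe-type lemma, and verifying it is the main obstacle.

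The remaining bookkeeping — checking degrees match so that everything is homogeneous, and that $\Delta \neq 0$ (which follows since both are systems of parameters, so $A$ is invertible over the fraction field, hence $\Delta$ is a nonzerodivisor in the \CM ring $R$) — is routine. So the skeleton is: (1) both ideals are generated by regular sequences of length $d$; (2) $\Delta$ is a nonzerodivisor; (3) apply the determinantal/linkage lemma $(\lst y d):\Delta = (\lst x d)$, for which formula \eqref{lem:monConj} and the adjugate identity are the inputs; (4) conclude injectivity of $\cdot\Delta$.
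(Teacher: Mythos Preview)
Your reformulation is right: injectivity is equivalent to $(\lst y d):\Delta \subseteq (\lst x d)$, and this is precisely a Wiebe-type linkage statement. The gap is that you do not actually prove this step. Your attempts with the adjugate identity are each abandoned mid-stream (``is not enough,'' ``wait, that's automatic''), and the closing skeleton merely asserts that the adjugate relation together with \eqref{lem:monConj} are ``the inputs'' without exhibiting how they combine. The difficulty is genuine: from $r\Delta \in (\lst y d)$ and $\Delta x_j \in (\lst y d)$ alone one does not directly manufacture a relation of the form $r(x_1\cdots x_d)^{N-1} \in (\lst x d)^{[N]}$ to which \eqref{lem:monConj} could be applied.

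The paper supplies exactly this missing mechanism, and it is not the route you sketch. It first picks $t$ with $(\lst x d)^{[t]} \overset{B}{\subseteq} (\lst y d)$, then invokes the Fouli--Huneke determinant comparison \cite[Corollary~2.5]{fouli2011} to obtain $(x_1\cdots x_d)^{td}\bigl(\det(AB)-(x_1\cdots x_d)^{t-1}\bigr)\in (\lst x d)^{[td+t]}$. Multiplying by $r$ and using $r\Delta\in(\lst y d)$ together with $\det(B)\cdot(\lst y d)\subseteq (\lst x d)^{[t]}$ forces $r(x_1\cdots x_d)^{td+t-1}\in(\lst x d)^{[td+t]}$, whereupon \eqref{lem:monConj} gives $r\in(\lst x d)$. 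The detour through the bracket power $(\lst x d)^{[t]}$ and the external determinant lemma is the idea your outline lacks. (If instead you intend simply to cite Wiebe's lemma as a black box, that is legitimate, but then the proof is a one-line citation rather than the argument you describe.)

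A small side remark: your claim that $\Delta$ is a nonzerodivisor ``since $A$ is invertible over the fraction field'' presumes $R$ is a domain, which is not assumed. The proof does not need $\Delta$ to be regular anyway; well-definedness of the map requires only $\Delta\cdot(\lst x d)\subseteq(\lst y d)$, which is the adjugate identity.
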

	\begin{proof}
		Notice that the above map is well-defined as $\Delta (\lst x d) \subset (\lst y d)$.  Let $r \in R$ such that $r \cdot \Delta \in (\lst y d)$. Since $\lst y d$ is a homogeneous system of parameters, there exists a positive integer $t$ and a matrix $B$ such that $(\lst x d)^{[t]} \overset B \subseteq (\lst y d)$. Hence we have the following inclusions:
	\begin{align*}
		&(\lst{x} d)^{[t]} \overset{B}{\subseteq} (\lst{y}d) \overset{A}{\subseteq} (\lst{x}d)\\
		&(\lst{x}d)^{[t]} \overset{D}{\subseteq} (\lst{x}d),
	\end{align*}
	where $D$ is the diagonal matrix with entries $x_i^{t-1}$.	Let $E=AB$. By \cite[Corollary 2.5]{fouli2011} we obtain 
	\[
	(x_1 \cdots x_d)^{td}(\det\ E -\det\ D) \in (\lst{x}d)^{[td+t]}.
	\]
	As $\det\ D = (x_1\cdots x_d)^{t-1}$, we have
	\[
	(x_1 \cdots x_d)^{td} (\det\ B) \Delta - (x_1 \cdots x_d)^{td}(x_1 \cdots x_d)^{t-1} \in (\lst{x}d)^{[td+t]}
	\]
	and thus multiplication by $r$ yields
	\begin{equation}\label{eq:det3}
	r(x_1 \cdots x_d)^{td}(\det\ B) \Delta-r(x_1 \cdots x_d)^{td}(x_1 \cdots x_d)^{t-1} \in (\lst{x}d)^{[td+t]}.
	\end{equation}
	Since $r\Delta \in (\lst y d)$, we have that
	\begin{equation}\label{eq:det1}
		r\cdot(x_1 \cdots x_d)^{td}\cdot\det B\cdot\Delta \in (x_1 \cdots x_d)^{td}\cdot\det B\cdot (\lst{y}d).
	\end{equation}
	By definition of $B$, $(\lst y d)\cdot \det\ B \subseteq (\lst x d)^{[t]}$ and hence
	\begin{equation}\label{eq:det2}
		(x_1 \cdots x_d)^{td}\cdot\det B\cdot (\lst{y}d) \subseteq (x_1 \cdots x_d)^{td}(\lst{x}d)^{[t]} \subset (\lst{x}d)^{[td+t]}.
	\end{equation}
	Combining (\ref{eq:det1}) and (\ref{eq:det2}) we see that $r\cdot(x_1 \cdots x_d)^{td}\cdot\det B\cdot\Delta \in (\lst{x}d)^{[td+t]}$.  Therefore, (\ref{eq:det3}) gives $r(x_1 \cdots x_d)^{td}(x_1 \cdots x_d)^{t-1} \in (\ul{x})^{[td+t]}$ and thus 
	\[
		r(x_1 \cdots x_d)^{td+t-1} \in (\ul{x})^{[td+t]}. 
	\]
	Since $R$ is \CMp, we have that our sequence $(\lst x d)$ is actually a homogeneous regular sequence.  Applying \eqref{lem:monConj} shows that $r \in (\lst x d)$ and hence multiplication by $\Delta$ is injective.
	\end{proof}

	\begin{prop}\label{prop:CMlinkSS}
		Let $(R,\m,k)$ be a standard graded \CM ring of dimension $d$.  If $\lst{y}{d}$ is a homogeneous system of parameters satisfying \eqref{eqn:gradedSS}, and $\lst{x}{d}$ is a homogeneous system of parameters such that $(\lst{y}{d}) \overset{A}{\subseteq} (\lst{x}{d})$, then $\lst{x}{d}$ satisfies \eqref{eqn:gradedSS} as well.
	\end{prop}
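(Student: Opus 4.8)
The plan is to deduce this from Proposition \ref{lem:sopMap} together with a degree count. First I would normalize the matrix $A$: since $R$ is standard graded and each $y_i$, $x_j$ is homogeneous, I may assume each entry $a_{ij}$ is homogeneous of degree $\deg(y_i) - \deg(x_j)$ (replace $a_{ij}$ by its component in that degree, and take $a_{ij}=0$ when $\deg(y_i) < \deg(x_j)$); this does not alter the relations $y_i = \sum_j a_{ij}x_j$. Expanding $\Delta = \det(A)$ by the Leibniz formula, every term $\prod_i a_{i\sigma(i)}$ is homogeneous of degree $\sum_i \deg(y_i) - \sum_i \deg(x_{\sigma(i)}) = \sum_i \deg(y_i) - \sum_j \deg(x_j)$, so $\Delta$ is homogeneous of this same degree; set $e := \sum_j \deg(y_j) - \sum_j \deg(x_j)$. (Note $\Delta \neq 0$, since the multiplication map in Proposition \ref{lem:sopMap} is asserted injective while $R/(\lst x d) \neq 0$; in fact one then gets $e \gs 0$, though this is not needed.)

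Next I would apply Proposition \ref{lem:sopMap}, which says multiplication by $\Delta$ defines an injection $R/(\lst x d) \xrightarrow{\,\cdot\,\Delta\,} R/(\lst y d)$. Since $\Delta$ is homogeneous of degree $e$, this map is homogeneous of degree $e$, so for every integer $i$ it restricts to an injection
\[
	\left(\frac{R}{(\lst x d)}\right)_i \hookrightarrow \left(\frac{R}{(\lst y d)}\right)_{i+e},
	\qquad\text{hence}\qquad
	\dim_k\left(\frac{R}{(\lst x d)}\right)_i \ls \dim_k\left(\frac{R}{(\lst y d)}\right)_{i+e}.
\]

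Finally I would match up the two \SuS thresholds. Fix any integer $i \gs \sum_j \deg(x_j) - d + 2$. Adding $e$ to both sides and using the definition of $e$ yields $i + e \gs \sum_j \deg(y_j) - d + 2$; since $\lst y d$ satisfies \eqref{eqn:gradedSS}, the right-hand side of the displayed inequality is $\ls 1$, and therefore $\dim_k(R/(\lst x d))_i \ls 1$. As $i$ ranged over all integers $\gs \sum_j \deg(x_j) - d + 2$, this is exactly the statement that $\lst x d$ satisfies \eqref{eqn:gradedSS}. The only step carrying content beyond the invocation of Proposition \ref{lem:sopMap} is the homogeneity and degree bookkeeping for $\Delta$ in the first paragraph; I do not expect a genuine obstacle, since the degree shift $e$ is precisely calibrated to convert the bound for $\lst x d$ into the bound for $\lst y d$.
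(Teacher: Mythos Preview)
Your proof is correct and follows essentially the same approach as the paper: compute $\deg(\Delta)=\sum\deg(y_j)-\sum\deg(x_j)$, invoke Proposition~\ref{lem:sopMap} to get the graded injection $(R/(\lst x d))_i\hookrightarrow(R/(\lst y d))_{i+\deg(\Delta)}$, and observe that the degree shift carries the threshold for $\lst x d$ to the threshold for $\lst y d$. You are slightly more careful than the paper in explicitly normalizing $A$ to have homogeneous entries and in treating all $i\gs\sum\deg(x_j)-d+2$ rather than just the boundary value, but the argument is the same.
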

	\begin{proof}
		Let $\deg(y_i) = f_i$ and $\deg(x_i) = e_i$.  This forces $\deg(a_{ij}) = f_i-e_j$ and hence $\deg(\Delta)= \sum_{i=1}^d f_i - \sum_{i=1}^d e_i$.  Since $R$ is \CMp, note that $\lst{y}{d}$ and $\lst{x}{d}$ are regular sequences.  Let $c = \sum_{i=1}^d e_i -d +2$.  By Proposition \ref{lem:sopMap}, multiplication by $\Delta$ is an injection. Hence we have the following linear map of vector spaces
		\[
			\xymatrixrowsep{5mm}
			\xymatrixcolsep{8mm}
			\xymatrix
				{
					\left(\frac{R}{(\lst{x}{d})}\right)_c \ar@{^{(}->}[r]^-{\Delta} & \left(\frac{R}{(\lst{y}{d})}\right)_{c+\deg(\Delta)}.
				}
		\]
Combining the above map with the fact that $\lst{y}{d}$ satisfies \eqref{eqn:gradedSS}, we see that
	\[
		\dim_k(R/({\lst{x}{d}}))_c \ls \dim_k(R/(\lst{y}{d}))_{c+\deg(\Delta)} \ls 1,
	\]
which is the desired result.
\end{proof}

	This next proposition distinguishes \SuS rings from stretched rings.  
	
	\begin{prop}\label{prop:gradded-ssReduction}
		If $(R,\m,k)$ is a standard graded \CM ring of dimension $d>0$ that is \SuSp, then for all homogeneous minimal reductions $(x_1,\ldots,x_d)$ of the maximal ideal $\m$, we have $\m^3 = (x_1,\ldots,x_d)\m^2$.
	\end{prop}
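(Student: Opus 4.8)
The plan is to translate the desired equality into the single assertion that the $h$-vector of $R$ has length at most three, and then to establish this by feeding a suitably bent homogeneous system of parameters into the \SuS condition.

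First I would fix a homogeneous minimal reduction $(\lst{x}{d})$ of $\m$: it exists because $k$ is infinite, it consists of elements of degree one because $R$ is standard graded, and it is a regular sequence because $R$ is \CMp. Set $f(t) := H_R(t)(1-t)^d$, the numerator of the Hilbert series. Then $H_{R/(\lst{x}{d})}(t) = f(t)$, so $\dim_k\left(R/(\lst{x}{d})\right)_i$ is the $i$-th coefficient $h_i$ of $f$; in particular it does not depend on the chosen reduction. Write $s := \deg f$. Since $R/(\lst{x}{d})$ is standard graded Artinian, its Hilbert function cannot vanish in a degree and then revive, so every entry of the $h$-vector $(h_0,\dots,h_s)$ is positive. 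Next, a short degree count gives $(\lst{x}{d})\cap\m^n = (\lst{x}{d})\m^{n-1}$ for all $n\gs 2$ (a homogeneous element of $(\lst{x}{d})$ of degree $n$ is a combination of the $x_j$ with homogeneous coefficients of degree $n-1$, which lie in $\m^{n-1}$); in particular $(\lst{x}{d})\m^2 = (\lst{x}{d})\cap\m^3$, so $\m^3 = (\lst{x}{d})\m^2$ is equivalent to $\m^3\subseteq(\lst{x}{d})$, i.e. to $\left(R/(\lst{x}{d})\right)_i = 0$ for all $i\gs 3$, i.e. to $s\ls 2$. Hence it suffices to prove $s\ls 2$, which automatically covers all homogeneous minimal reductions at once.

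By Remark \ref{rem:SS-is-streched} a \SuS ring with infinite residue field is stretched, so $h_i\ls 1$ for all $i\gs 2$; combined with the positivity above this forces $h_2 = h_3 = \cdots = h_s = 1$. Suppose, for contradiction, that $s\gs 3$. Since $d>0$ we may form the ideal $(x_1^s,x_2,\dots,x_d)$, which is $\m$-primary and minimally generated by $d = \dim R$ elements, hence a homogeneous system of parameters and (as $R$ is \CMp) a regular sequence. The degrees of its generators sum to $s+(d-1)$, so condition \eqref{eqn:gradedSS} forces
\[
	\dim_k\left(\frac{R}{(x_1^s,x_2,\dots,x_d)}\right)_i \ls 1 \qquad \text{for all } i\gs s+1 .
\]
On the other hand, quotienting by a homogeneous regular sequence multiplies the Hilbert series by $(1-t^s)(1-t)^{d-1}$, so
\[
	H_{R/(x_1^s,x_2,\dots,x_d)}(t) = H_R(t)\,(1-t^s)(1-t)^{d-1} = f(t)\,(1+t+\cdots+t^{s-1}),
\]
and the coefficient of $t^{s+1}$ on the right equals $h_2+h_3+\cdots+h_s = s-1\gs 2$, contradicting the displayed inequality at $i=s+1$. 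Therefore $s\ls 2$, and by the previous paragraph $\m^3 = (\lst{x}{d})\m^2$ for every homogeneous minimal reduction $(\lst{x}{d})$ of $\m$.

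I do not expect a serious obstacle. The two points needing a little care are the equivalence $\m^3 = (\lst{x}{d})\m^2 \Leftrightarrow s\ls 2$, which is the degree bookkeeping above, and the validity of the Hilbert-series formula for the bent parameter ideal $(x_1^s,x_2,\dots,x_d)$, which rests only on its being a regular sequence — automatic in the \CM graded setting since it is $\m$-primary and minimally generated by $d = \dim R$ elements.
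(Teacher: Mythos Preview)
Your proof is correct and takes a genuinely different route from the paper's.

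The paper argues by induction on $d$. In dimension one it applies the \SuS hypothesis first to $x$ to obtain $\m^2 = x\m + (y)$, then to $x^2$ to obtain $\m^3 = x^2\m + (z)$, and then proves via a separate minimality claim that one may take $z = xy$, whence $\m^3 = x\m^2$; the inductive step lifts the equality modulo $x_d$. Your argument is uniform in $d$: you first convert the conclusion into the single numerical statement $s = \deg f \ls 2$, and then feed the one bent system $(x_1^s,x_2,\dots,x_d)$ into the \SuS condition, reading off the contradiction directly from the Hilbert-series identity $H_{R/(x_1^s,x_2,\dots,x_d)}(t) = f(t)(1+t+\cdots+t^{s-1})$. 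This avoids both the dimension induction and the delicate claim that $xy\notin x^2\m+\m^4$, and it uses the \SuS hypothesis only once. The paper's approach, by contrast, stays closer to the generators of $\m^2$ and $\m^3$ and never leaves linear and quadratic parameter ideals; its advantage is that it gives a concrete description $\m^3 = x(x\m+(y))$ rather than a purely numerical conclusion, which is in the spirit of the explicit generator manipulations used later in Proposition~\ref{prop:stretchFrob}.
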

	\begin{proof}
		Induct on $d$.  For the dimension one case let $x$ be a minimal reduction. (Note that $\deg(x)=1$.)  Because $R$ is super stretched, we have that $\dim_k (R/xR)_2 = 1$. Further, 
		\[
			\dim_k \frac{\m^2+(x)}{\m^3+(x)} = \dim_k \frac{\m^2}{\m^3+((x)\cap\m^2)} = \dim_k \frac{\m^2}{\m^3+x\m} = 1.
		\]
	Note that the second equality is always true as $(x)\cap \m^2 = x\m$ if $x\notin \m^2$. The displayed equality says that there is a $y\in \m^2 - (x\m +\m^3)$ such that $\m^2 = x\m + (y) + \m^3$.  By Nakayama's lemma we have $\m^2 = x\m +(y)$.  

	We now consider $R$ modulo $x^2$.  Due to the grading we have that $(x^2)\cap \m^3=x^2\m$.  This gives us 
		\[
			\dim_k \frac{\m^3}{\m^4 + x^2\m} = 1.
		\]
	Thus, as before, there exists $z\in \m^3 - (x^2\m + \m^4)$ such that $\m^3 = x^2\m + (z) + \m^4$.  Nakayama's lemma shows that $\m^3 = x^2\m +(z)$.  

	Notice that we can choose $z$ to be anything in $\m^3 - (x^2\m + \m^4)$.  We would like to choose $z = xy$, but first we must show 
	\begin{claim*}
		The element $xy$ is not in $x^2\m + \m^4$.  
	\end{claim*}

	If the claim holds, then we have 
		\begin{align*}
			\m^3 & = x^2\m +(z)\\
				& = x^2\m +(xy) \\
				& = x(x\m +(y)) \\
				& = x\m^2.
		\end{align*}
	This is the desired result for dimension one.

	To show the claim, let $n$ be minimally chosen such that $\m^n = x\m^{n-1}$ and suppose $xy \in x^2\m + \m^4$. Since $\m^2 = x\m+(y)$, we have that $x\m^2 \subseteq x^2\m +\m^4$. Assume that $n>3$ and multiply by $\m^{n-3}$. As $R$ is \CMp, we have that $x$ is a non-zerodivisor.  Cancel the $x$'s to observe that $\m^{n-1} \subseteq x\m^{n-2} + \m^n$.  This forces $\m^{n-1}=x\m^{n-2}$, a contradiction since $n$ was chosen to be minimal. Thus $xy\notin (x^2\m+\m^4)$.

	For higher dimensions we may assume that 
	\[
		\frac{\m^3+(x_d)}{(x_d)} = \frac{(\lst{x}{d-1})\m^2+(x_d)}{(x_d)}.
	\]
	Lifting gives us the inclusion
	\[
		\m^3 \subseteq (\lst{x}{d-1})\m^2 + (x_d).
	\]
	Notice by the grading, and the regularity of $x_d$, we have that $\m^2 = (\m^3:x_d)$ and hence
	\begin{align*}
		\m^3 &= (\lst{x}{d-1})\m^2 + x_d(\m^3:x_d) \\
		& = (\lst{x}{d})\m^2.
	\end{align*}
\end{proof}

\begin{rem}
	When $R$ is a zero-dimensional \SuS graded ring, Proposition~\ref{prop:gradded-ssReduction} fails as can be seen by the ring $k[x]/(x^4)$.  
\end{rem}

	Before moving on to Proposition \ref{prop:stretchFrob}, we notice that for elements $\lst a k$ of a ring $R$, and for every positive integer $m$,
		\begin{equation}\label{lem:prodFrob}
			(\lst{a^m}{k})(\lst a k)^{(k-1)(m-1)} = (\lst a k)^{(m-1)k +1}.
		\end{equation}
	A proof of this can be found in \cite[Equation 8.1.6]{swanson}.

	\begin{prop}\label{prop:stretchFrob}
		Let $(R,\m,k)$ be a $d$-dimensional standard graded ring and let the ideal $(\lst{x}{d})$ be a homogeneous reduction of $\m$ such that $(\lst{x}{d})\m^2 = \m^3$. If $R$ is stretched, then $(x_1^t,\ldots,x_d^t)$ satisfies (\ref{eqn:gradedSS}) for all $t >0$.
	\end{prop}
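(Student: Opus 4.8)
The plan is to compute the Hilbert series of $R/(x_1^t,\dots,x_d^t)$ outright and read the bound off it. Since $R$ is stretched it is in particular Cohen--Macaulay, and $(x_1,\dots,x_d)$ is a homogeneous system of parameters — it is $\m$-primary, being a reduction of $\m$, and is generated by $d=\dim R$ elements — hence a regular sequence; the same is then true of $x_1^t,\dots,x_d^t$, again a homogeneous system of parameters, now of forms of degree $t$. I will run the argument assuming, as in the intended application, that the $x_j$ have degree $1$ (the only change needed to allow $\deg x_j>1$ is to replace $(1-u)^d$ by $\prod_j(1-u^{\deg x_j})$ throughout). So the inequality to establish is $\dim_k\bigl(R/(x_1^t,\dots,x_d^t)\bigr)_i\le 1$ for all $i\ge\sum_j\deg(x_j^t)-d+2=(t-1)d+2$.

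First I would determine $H_{R/(x_1,\dots,x_d)}(u)$. The hypothesis $(x_1,\dots,x_d)\m^2=\m^3$ gives $\m^3\subseteq(x_1,\dots,x_d)$, so $R/(x_1,\dots,x_d)$ is concentrated in degrees $0,1,2$, with Hilbert series $1+au+cu^2$ for suitable $a,c\ge 0$. Since $x_1,\dots,x_d$ is a regular sequence of linear forms, this polynomial equals $H_R(u)(1-u)^d$, i.e.\ the numerator of the Hilbert series of $R$, equivalently the $h$-vector polynomial; because $R$ is stretched, its $h$-vector has the form $(1,a,1,\dots,1)$, so here — the socle degree being at most $2$ — it is $(1,a,c)$ with $c\le 1$. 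Thus $H_R(u)=(1+au+cu^2)/(1-u)^d$ with $c\in\{0,1\}$.

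Then, $x_1^t,\dots,x_d^t$ being a regular sequence of degree-$t$ forms, the Koszul complex on it resolves $R/(x_1^t,\dots,x_d^t)$, so
\[
H_{R/(x_1^t,\dots,x_d^t)}(u)=H_R(u)(1-u^t)^d=(1+au+cu^2)\left(\frac{1-u^t}{1-u}\right)^{d}=(1+au+cu^2)\,(1+u+\cdots+u^{t-1})^{d}.
\]
Writing $P(u)=1+au+cu^2$ (degree $\le 2$) and $Q(u)=(1+u+\cdots+u^{t-1})^{d}$ (degree $(t-1)d$, nonnegative coefficients, leading coefficient $1$), the product $PQ$ has degree $\le (t-1)d+2$; its coefficient of $u^i$ vanishes for $i>(t-1)d+2$, and its coefficient of $u^{(t-1)d+2}$ is the product of the leading coefficients of $P$ and $Q$, namely $c\le 1$. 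Since these coefficients are exactly the numbers $\dim_k\bigl(R/(x_1^t,\dots,x_d^t)\bigr)_i$, the system $x_1^t,\dots,x_d^t$ satisfies \eqref{eqn:gradedSS}, which is the claim. (For $d=0$ this is immediate: then $\m^3=0$ and ``stretched'' already forces $\dim_k R_i\le 1$ for $i\ge 2$.)

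The step I expect to need the most care is the determination of $H_{R/(x_1,\dots,x_d)}(u)$ in the second paragraph: one has to combine $\m^3\subseteq(x_1,\dots,x_d)$, which bounds the degree, with the stretched hypothesis, which bounds the degree-$2$ coefficient by $1$; and using ``stretched'' for the given reduction requires knowing that the $h$-vector is independent of the chosen homogeneous minimal reduction (it equals $H_R(u)(1-u)^d$), so that the hypothesis, phrased for some reduction, applies to $(x_1,\dots,x_d)$. Once $H_R(u)=(1+au+cu^2)/(1-u)^d$ is in hand the remainder is the one-line computation above; note also that \eqref{lem:prodFrob} gives $(x_1,\dots,x_d)^{(t-1)d+1}\subseteq(x_1^t,\dots,x_d^t)$, but the crude quotient bound coming from that inclusion is not sharp enough, so I would not build the proof on it.
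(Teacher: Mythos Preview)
Your proof is correct and considerably more streamlined than the paper's. The paper argues by explicit ideal manipulations: it first shows $\m^{dt-d+3}\subseteq(\lst{x^t}{d})\m^{dt-d+2-t}$ using the identity \eqref{lem:prodFrob}, then establishes the combinatorial equality $(\lst x d)^{dt-d}=(\lst{x^t}{d})(\lst x d)^{dt-d-t}+(x_1\cdots x_d)^{t-1}$ by examining monomials, and finally uses the stretched hypothesis in the form $\m^2=(\lst x d)\m+(y)$ to conclude that $\m^{dt-d+2}$ is cyclic modulo $(\lst{x^t}{d})\m^{dt-d-t+2}$. Your approach bypasses all of this by computing the Hilbert series directly: once you observe that $H_R(u)(1-u)^d=1+au+cu^2$ with $c\le 1$ (using $(\lst x d)\m^2=\m^3$ for the degree bound and the reduction-independence of the $h$-vector to transfer the stretched inequality to this particular reduction), the identity $H_{R/(\lst{x^t}{d})}(u)=(1+au+cu^2)(1+u+\cdots+u^{t-1})^d$ does the rest in one line. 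What your route buys is brevity and transparency; what the paper's route buys is an explicit generator $y(x_1\cdots x_d)^{t-1}$ for the top graded piece, which is not needed for the statement but is a bit more informative.
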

	\begin{proof}
		We would like to show that for all $t >0$, 
		\[
			\dim_k \left(\frac{R}{(\lst{x^t}{d})}\right)_i \ls 1
		\]
	for each $i \gs dt - d +2$. In particular, we need that
	\[
		\dim_k \left( \frac{\m^{dt-d+2}}{\m^{dt-d+3}+ (\lst{x^t}{d})\cap \m^{dt-d+2}}\right) \ls 1.
	\]
	To show this, we need to first show the equality
	  	\begin{equation}\label{eq:dimequal}
			\frac{\m^{dt-d+2}}{\m^{dt-d+3}+ (\lst{x^t}{d})\cap \m^{dt-d+2}} = \frac{\m^{dt-d+2}}{ (\lst{x^t}{d})\m^{dt-d+2-t}}.
	  	\end{equation}
	In order to see equation \eqref{eq:dimequal}, notice that 
	\begin{equation}\label{eq:denom}
		\m^{dt-d+3}+ (\lst{x^t}{d})\cap \m^{dt-d+2} = \m^{dt-d+3}+ (\lst{x^t}{d})\m^{dt-d+2-t}.
	\end{equation}
	As $(\lst x d)\m^2 = \m^3$, we have that for any positive integer $N$, $(\lst x d)^N \m^2 = \m^{N+2}$.  Further, if we consider $\m^{dt-d+3}$ and let $N = d(t-1)+1$, we have that
	\begin{align*}
		\m^{dt-d+3} = \m^{d(t-1)+3} = (\lst x d)^{d(t-1)+1} \m^2.
	\end{align*}
	By \eqref{lem:prodFrob} we have that $(\lst x d)^{d(t-1)+1} = (\lst{x^t} d)(\lst x d)^{(d-1)(t-1)}$.  Therefore we have that
	\begin{align*}
		\m^{dt-d+3} &= (\lst{x^t} d)(\lst x d)^{(d-1)(t-1)} \m^2 \\
		 &= (\lst{x^t} d)\m^{(d-1)(t-1)+2} \\
		 & \subseteq (\lst{x^t} d)\m^{(d-1)(t-1)+1} = (\lst{x^t}{d})\m^{dt-d+2-t}.
	\end{align*}
	Applying this fact to Equation \eqref{eq:denom} allows us to write
	\[
		\m^{dt-d+3}+ (\lst{x^t}{d})\cap \m^{dt-d+2} = (\lst{x^t}{d})\m^{dt-d+2-t},
	\]
	and hence equality holds in Equation \eqref{eq:dimequal}.

		The next step is to show that 
	\begin{equation}\label{eq:powers}
		(\lst x d)^{dt-d} = (\lst{x^t} d)(\lst x d)^{dt-d-t} + (x_1x_2\cdots x_d)^{t-1}.
	\end{equation}
	Notice that the generators of $(\lst x d)^{dt-d-t}$ are all the monomials in $\lst x d$ of degree $dt-d-t$.  Thus the generators of the product $(\lst{x^t} d)(\lst x d)^{dt-d-t}$ are monomials $m$ in $\lst x d$ of degree $d(t-1)$ such that $x_j^t|m$ for some $j = 1, 2, \ldots d$.  Call the set of these monomials $M$. The monomials in $M$ are also a part of a minimal generating set of the ideal $(\lst x d)^{dt-d}$.  In fact, the set $N = \{ x_1^{n_1}x_2^{n_2}\cdots x_d^{n_d} \vl \sum n_i = dt-d \}$ is a generating set for the ideal $(\lst x d)^{dt-d}$.  Hence, the elements in $N$ that are not in $M$ are
	\[
		N\setminus M = \{ m = x_1^{n_1}x_2^{n_2}\cdots x_d^{n_d} \vl \sum n_i = d(t-1) \text{ and such that } x_j^t \not| m \text{ for all } j\}.
	\]
	This implies that $m \in N\setminus M$ is an element of the ideal $(x_1x_2 \cdots x_d)^{t-1}$ as $\deg(m) = d(t-1)$ and $x_j^t$ does not divide $m$.  We therefore have the equality in Equation \eqref{eq:powers}.

		Since $\lst x d$ is a reduction, we can write $\m^{dt-d+2} = \m^2 (\lst x d)^{dt-d}$. Combining this with Equation \eqref{eq:powers} yields
	\begin{align*}
		\m^{dt-d+2} &= \m^2 (\lst x d)^{dt-d}\\
		 	& = \m^2 ( (\lst{x^t} d)(\lst x d)^{dt-d-t} + (x_1x_2\cdots x_d)^{t-1} ) \\
			& = (\lst{x^t} d)(\lst x d)^{dt-d-t}\m^2 + (x_1x_2\cdots x_d)^{t-1}\m^2 \\
			& = (\lst{x^t} d)\m^{dt-d-t+2} + (x_1x_2\cdots x_d)^{t-1}\m^2.
	\end{align*}
	Because $R$ is stretched, we may proceed as in Proposition \ref{prop:gradded-ssReduction} and choose a $y\in \m^2 - ((\lst x d)\m +\m^3)$ such that $\m^2 = (\lst x d)\m +(y)$.  Substituting into the above relation yields
	\begin{align*}
		\m^{dt-d+2} & = (\lst{x^t} d)\m^{dt-d-t+2} + (x_1x_2\cdots x_d)^{t-1}((\lst x d)\m +(y)) \\
			&  = (\lst{x^t} d)\m^{dt-d-t+2} + (y(x_1x_2\cdots x_d)^{t-1}).
	\end{align*}
	Thus, modulo $(\lst{x^t} d)\m^{dt-d-t+2}$, we have that $\m^{dt-d+2}$ is one dimensional. Combining this fact with \eqref{eq:dimequal} give the desired result.
	\end{proof}

		We are now ready to state and prove some equivalent characterizations of \SuSp. This is also the main result of this section and is used to show that rings of graded countable \CM type are \SuS (see Theorem \ref{thm:countSS}).

	\begin{thm}\label{thm:SSequiv}
		Let $(R,\m,k)$ be a standard graded \CM ring of dimension $d>0$.  The following are equivalent:
		\begin{enumerate}
			\item\label{thm:SSequiv1} $R$ is \SuSp;
			\item\label{thm:SSequiv2} $R$ is stretched and $J\m^2 = \m^3$ for every homogeneous reduction $J$ of the maximal ideal;
			\item\label{thm:SSequiv3} $R$ is stretched and $J\m^2 = \m^3$ for some homogeneous reduction $J$ of the maximal ideal.
		\end{enumerate}
	\end{thm}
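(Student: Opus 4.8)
The plan is to prove the cycle of implications $(1)\Rightarrow(2)\Rightarrow(3)\Rightarrow(1)$, leaning on the three propositions just established. The implications $(1)\Rightarrow(2)$ and $(2)\Rightarrow(3)$ are essentially free: the first is exactly the content of Proposition~\ref{prop:gradded-ssReduction} (super-stretched forces $J\m^2=\m^3$ for every homogeneous minimal reduction) together with Remark~\ref{rem:SS-is-streched} (super-stretched with infinite residue field is stretched); the second is trivial, since "for every" implies "for some" once we know such a reduction exists (it does, by Northcott--Rees, as $k$ is infinite). So the real work is $(3)\Rightarrow(1)$.

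For $(3)\Rightarrow(1)$, suppose $R$ is stretched and $J\m^2=\m^3$ for one homogeneous reduction $J=(x_1,\dots,x_d)$, necessarily of degree-one elements. I want to verify the super-stretched inequality $\dim_k\bigl(R/(z_1,\dots,z_d)\bigr)_i\ls 1$ for every homogeneous system of parameters $z_1,\dots,z_d$ and every $i\gs\sum\deg(z_j)-d+2$. The strategy is to reduce the arbitrary system of parameters to the special one $x_1^t,\dots,x_d^t$ via the containment machinery of Propositions~\ref{lem:sopMap} and~\ref{prop:CMlinkSS}. Concretely: given a homogeneous s.o.p.\ $z_1,\dots,z_d$, choose $t$ large enough that each $x_j^t$ lies in the ideal $(z_1,\dots,z_d)$ — this is possible because $(z_1,\dots,z_d)$ is $\m$-primary, so $\m^N\subseteq(z_1,\dots,z_d)$ for some $N$, and $x_j^{t}\in\m^t\subseteq(z_1,\dots,z_d)$ once $t\gs N$. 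Taking $t$ still larger if necessary so that $dt\gs\sum\deg(z_j)$, we get a containment $(x_1^t,\dots,x_d^t)\overset{A}{\subseteq}(z_1,\dots,z_d)$ of ideals generated by homogeneous systems of parameters.

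Now Proposition~\ref{prop:stretchFrob} applies — $R$ is stretched and $J\m^2=\m^3$ — giving that $x_1^t,\dots,x_d^t$ satisfies \eqref{eqn:gradedSS}, i.e.\ $\dim_k\bigl(R/(x_1^t,\dots,x_d^t)\bigr)_i\ls 1$ for $i\gs dt-d+2$. Then Proposition~\ref{prop:CMlinkSS}, applied with $\lst{y}{d}=x_1^t,\dots,x_d^t$ (the "inner" system satisfying \eqref{eqn:gradedSS}) and $\lst{x}{d}=z_1,\dots,z_d$ (the "outer" system), yields that $z_1,\dots,z_d$ also satisfies \eqref{eqn:gradedSS}. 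Here I must double-check the degree bookkeeping in Proposition~\ref{prop:CMlinkSS}: that proposition shows $z_1,\dots,z_d$ satisfies the bound with the threshold $c=\sum\deg(z_j)-d+2$ computed from the $z$'s themselves (the proof sets $c=\sum e_i-d+2$ with $e_i=\deg(z_i)$ and transports along multiplication by $\Delta$), which is exactly the threshold required by Definition~\ref{def:gradedSS} for the system $z_1,\dots,z_d$. Since $z_1,\dots,z_d$ was an arbitrary homogeneous s.o.p., $R$ is super-stretched.

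The main obstacle I anticipate is purely bureaucratic rather than conceptual: making sure the exponent $t$ can be chosen to \emph{simultaneously} (a) force $(x_1^t,\dots,x_d^t)\subseteq(z_1,\dots,z_d)$ and (b) not disturb the degree threshold — i.e.\ that the bound "$i\gs dt-d+2$" coming from Proposition~\ref{prop:stretchFrob} for the inner system is harmless because Proposition~\ref{prop:CMlinkSS} only ever needs the single graded degree $c=\sum\deg(z_j)-d+2$ of the outer system to inject into \emph{some} degree of $R/(x_1^t,\dots,x_d^t)$, and that target degree is $c+\deg(\Delta)$ which automatically satisfies the inner bound $dt-d+2$ provided $dt\gs\sum\deg(z_j)$ (so that $\deg(\Delta)=dt-\sum\deg(z_j)\gs 0$ and hence $c+\deg(\Delta)=dt-d+2\gs dt-d+2$). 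Once that inequality chain is laid out cleanly, the argument closes.
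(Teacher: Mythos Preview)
Your proposal is correct and follows exactly the same route as the paper's proof: $(1)\Rightarrow(2)$ via Proposition~\ref{prop:gradded-ssReduction} (the paper leaves the ``stretched'' part implicit, while you correctly cite Remark~\ref{rem:SS-is-streched}), $(2)\Rightarrow(3)$ trivially, and $(3)\Rightarrow(1)$ by choosing $t$ with $(x_1^t,\dots,x_d^t)\subseteq(z_1,\dots,z_d)$ and then chaining Proposition~\ref{prop:stretchFrob} with Proposition~\ref{prop:CMlinkSS}. Your extra degree bookkeeping (ensuring $dt\gs\sum\deg(z_j)$ so that $\Delta\neq 0$) is a detail the paper glosses over but which is harmless since $t$ may be taken arbitrarily large.
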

	\begin{proof}
		$(\ref{thm:SSequiv1}) \Rightarrow (\ref{thm:SSequiv2})$: This is an application of Proposition \ref{prop:gradded-ssReduction}.

		$(\ref{thm:SSequiv2}) \Rightarrow (\ref{thm:SSequiv3})$: This is straightforward.

		$(\ref{thm:SSequiv3}) \Rightarrow (\ref{thm:SSequiv1})$: Assume that $J$ is as in \eqref{thm:SSequiv3} and is generated by $\lst{x}{d}$.  Let $(\lst{y}{d})$ be an ideal of $R$ generated by a homogeneous system of parameters.  We have that there exists a $t$ such that $\m^t \subseteq (\lst{y}{d})$.  In particular, $(\lst{x^t}{d}) \subseteq (\lst{y}{d})$.  By proposition \ref{prop:CMlinkSS}, $(\lst{y}{d})$ satisfies (\ref{eqn:gradedSS}) since $(\lst{x^t}{d})$ satisfies (\ref{eqn:gradedSS}) by proposition \ref{prop:stretchFrob}.  Therefore, $R$ is \SuSp.
	\end{proof}

	\subsection{Super-Stretched and $h$-vectors} The next two results are immediate corollaries of Theorem \ref{thm:SSequiv} that describe the $h$-vector of a \SuS ring.

	\begin{cor}\label{cor:SShvector}
		Assume that $(R,\m,k)$ is a standard graded \SuS ring of dimension $d>0$ with infinite residue field $k$. Then the $h$-vector of $R$ is of one of the following forms: $(1)$, $(1,n)$, or $(1,n,1)$ for some non-zero positive integer $n$.  
	\end{cor}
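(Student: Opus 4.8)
The plan is to read off the $h$-vector from an Artinian reduction and to use the super-stretched hypothesis twice: once to truncate the $h$-vector to length at most three, and once to bound its last entry by $1$. Since $k$ is infinite and $R$ is standard graded, the maximal ideal $\m$ has a minimal reduction $\x = (x_1,\dots,x_d)$ consisting of linear forms; then $\ol R := R/\x$ is Artinian and, by definition, the $h$-vector of $R$ is the Hilbert function $(h_{\ol R}(0), h_{\ol R}(1), \dots)$ of $\ol R$. (This is independent of the choice of $\x$, since $H_{\ol R}(t) = (1-t)^d H_R(t) = f(t)$ is the numerator of the Hilbert series of $R$.) Because $\x$ is a homogeneous system of parameters with $\sum \deg(x_j) - d + 2 = 2$, Definition~\ref{def:gradedSS} gives $h_{\ol R}(i) = \dim_k (R/\x)_i \le 1$ for every $i \ge 2$; and, since $\x$ is a homogeneous minimal reduction of $\m$, Proposition~\ref{prop:gradded-ssReduction} gives $\x\m^2 = \m^3$.

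Next I would use $\x\m^2 = \m^3$ to show $h_{\ol R}(i) = 0$ for all $i \ge 3$. As $R$ is standard graded, $\m^3 = \bigoplus_{i \ge 3} R_i$, so for each $i \ge 3$ we have $R_i \subseteq \m^3 = \x\m^2 \subseteq \x$, and hence $(R/\x)_i = R_i/(\x \cap R_i) = 0$. Thus the $h$-vector has the form $(1, a_1, a_2)$ with $a_1 = h_{\ol R}(1) \ge 0$ and $a_2 = h_{\ol R}(2) \in \{0, 1\}$.

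It remains to handle the degenerate cases. If $a_2 = 0$, the $h$-vector is $(1, a_1)$, which is $(1,n)$ when $a_1 > 0$ and $(1)$ when $a_1 = 0$. If $a_2 = 1$, I claim $a_1 > 0$: otherwise $\m/\m^2$ is spanned by the images of $x_1, \dots, x_d$, so $\m = \m^2 + \x$, and Nakayama's lemma forces $\m = \x$, which makes $R$ regular with $h$-vector $(1)$ and contradicts $a_2 = 1$. Hence $a_1 > 0$ and the $h$-vector is $(1, n, 1)$. This exhausts all cases, so the $h$-vector of $R$ is one of $(1)$, $(1, n)$, $(1, n, 1)$ for a positive integer $n$. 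I expect no genuine obstacle here beyond correctly invoking Definition~\ref{def:gradedSS} and Proposition~\ref{prop:gradded-ssReduction}; the only step that warrants an explicit line of justification is the identification $\m^3 = \bigoplus_{i \ge 3} R_i$, which is what converts the ideal-theoretic identity $\x\m^2 = \m^3$ into the vanishing of $\ol R$ in degrees $\ge 3$.
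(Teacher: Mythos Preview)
Your proof is correct and follows essentially the same approach as the paper: use a linear minimal reduction to read off the $h$-vector, apply the super-stretched condition to bound $h_{\ol R}(2)\le 1$, and use $\x\m^2=\m^3$ to kill $h_{\ol R}(i)$ for $i\ge 3$. The only cosmetic differences are that the paper cites Theorem~\ref{thm:SSequiv} (which in turn rests on Proposition~\ref{prop:gradded-ssReduction}) rather than the proposition directly, and it omits the easy check that $a_2=1$ forces $a_1>0$, which you supply explicitly.
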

	\begin{proof}
		Let $J$ be a minimal reduction of the maximal ideal $\m$.  Since $R$ is \SuSp, we have by Theorem \ref{thm:SSequiv} that $J\m^2 = \m^3$.  Let $\ol R = R/J$ and notice that we have
		\[
			h_{\ol R}(3) = \dim_k \left( \frac{\m^3}{\m^4 + J\cap \m^3} \right) = \dim_k \left( \frac{\m^3}{\m^4 + J\m^2} \right) = \dim_k \left( \frac{\m^3}{\m^4 + \m^3} \right) = 0.
		\]
	This forces $h_{\ol R}(n) = 0 $	 for all $n > 2$.  The fact that $R$ is stretched forces $h_{\ol R}(2) \ls 1$.  Therefore, the only possible $h$-vectors are $(1)$, $(1,n)$, or $(1,n,1)$ where $n$ is a non-zero positive integer.  
	\end{proof}

	\begin{cor}
		A standard graded hypersurface with positive dimension and multiplicity at most $3$ is \SuSp.
	\end{cor}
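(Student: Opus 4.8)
The plan is to reduce to Theorem~\ref{thm:SSequiv}: by the implication $(\ref{thm:SSequiv3}) \Rightarrow (\ref{thm:SSequiv1})$ it suffices to produce a homogeneous minimal reduction $J$ of $\m$ such that $R$ is stretched and $J\m^2 = \m^3$. Write $R = S/(f)$ with $S$ a standard graded polynomial ring over $k$ and $f$ homogeneous of degree $e$; since $f$ is a nonzerodivisor on $S$ we have $d := \dim R = \dim S - 1$, which is $\ge 1$ by hypothesis. From the Hilbert series
\[
	H_R(t) = \frac{1-t^{e}}{(1-t)^{d+1}} = \frac{1 + t + \cdots + t^{e-1}}{(1-t)^{d}},
\]
the multiplicity of $R$ equals $e$, so the hypothesis says $1 \le e \le 3$.

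First I would record the $h$-vector. Pick a homogeneous minimal reduction $J = (\lst x d)$ of $\m$ (available since $k$ is infinite); the $x_i$ are linear forms and, $R$ being \CMp, a regular sequence, so $\overline R := R/J$ is Artinian with $H_{\overline R}(t) = (1-t)^{d} H_R(t) = 1 + t + \cdots + t^{e-1}$. Hence the $h$-vector of $R$ is a string of $e$ ones. In particular $\dim_k (\overline R)_i \le 1$ for every $i\ge 0$, so $R$ is stretched in the sense of Definition~\ref{def:graded-stretched}; and $(\overline R)_i = 0$ for $i \ge e$, so, since $e \le 3$, we get $\m^3 \subseteq J$.

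It remains to upgrade $\m^3 \subseteq J$ to $\m^3 = J\m^2$. Because $J$ is generated by forms of degree one, a homogeneous element of $J$ of degree $n \ge 1$ can be written $\sum_i x_i r_i$ with each $r_i$ homogeneous of degree $n-1$, hence lies in $J\m^{n-1}$; therefore $J \cap \m^n = J\m^{n-1}$ for every $n$. Taking $n = 3$ and combining with $\m^3 \subseteq J$ gives $\m^3 = J \cap \m^3 = J\m^2$. Now $R$ is stretched and $J\m^2 = \m^3$ for this reduction, so Theorem~\ref{thm:SSequiv} shows $R$ is \SuSp.

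I do not expect a genuine obstacle: the argument is the Hilbert-series computation identifying the $h$-vector of a degree-$e$ hypersurface as $(1,\dots,1)$ ($e$ ones), together with the degree-one bookkeeping behind $J\cap\m^n = J\m^{n-1}$, both routine. One could instead avoid series entirely: a homogeneous minimal reduction $J=(\lst x d)$ necessarily consists of $d$ linearly independent linear forms, so $S/(\lst x d)$ is a polynomial ring in one variable $u$ and $\overline R \cong k[u]/(\overline f)$ with $\deg \overline f = \dim_k \overline R = e(R) \le 3$, from which stretchedness and $\m^3 \subseteq J$ are immediate.
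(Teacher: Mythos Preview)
Your argument is correct and follows the same route as the paper: identify the $h$-vector of a multiplicity-$e$ hypersurface as $e$ ones, and feed this into condition~(\ref{thm:SSequiv3}) of Theorem~\ref{thm:SSequiv}. The paper's proof simply asserts that the $h$-vectors $(1)$, $(1,1)$, $(1,1,1)$ satisfy that condition, whereas you spell out the verification (stretchedness is immediate, and $J\m^2=\m^3$ comes from $\m^3\subseteq J$ together with the degree-bookkeeping identity $J\cap\m^n=J\m^{n-1}$ for linearly generated $J$); so your version is a fleshed-out instance of the same argument rather than a different one.
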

	\begin{proof}
		Let $R$ be a hypersurface with multiplicity $e \ls 3$.  As the sum of the $h$-vector is the multiplicity, the only possible $h$-vectors are $(1)$, $(1,1)$, and $(1,1,1)$.  All of these satisfy condition $\ref{thm:SSequiv3}$ of Theorem \ref{thm:SSequiv}.
	\end{proof}

	\section{Super-Stretched and Graded Countable Type}

		In this section we generalize \cite[Theorem A]{eisenbud88} to standard graded rings of graded countable \CM type.  The proof of Theorem \ref{thm:countSS} is an extension of D. Eisenbud and J. Herzog's proof and follows the same basic outline.	The following lemma is helpful in proving Theorem \ref{thm:countSS}.  

	\begin{lem}\label{lem:ideals}
		If $(R,\m,k)$ is a standard graded ring such that $\dim_k(R_i)>1$ for some $i > 0$, then there exists at least $|k|$ many distinct homogeneous principle ideals in $R$.
	\end{lem}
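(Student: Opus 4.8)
The plan is to exhibit an explicit family of $|k|$ many principal homogeneous ideals, indexed by the elements of $k$, and show they are pairwise distinct. First I would fix $i > 0$ with $\dim_k R_i > 1$ and pick two $k$-linearly independent elements $u, v \in R_i$. The natural candidates for the family are the principal ideals $(u + \lambda v)$ as $\lambda$ ranges over $k$; since $|k| \geq |k|$ and $k$ is infinite (indeed uncountable in our standing hypotheses, though we only need infinite here), this would give at least $|k|$ candidates. So the crux is to show that for $\lambda \neq \mu$ in $k$, the ideals $(u + \lambda v)$ and $(u + \mu v)$ are distinct.

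To do this I would argue by contradiction: suppose $(u + \lambda v) = (u + \mu v)$ for some $\lambda \neq \mu$. Then $u + \lambda v = r(u + \mu v)$ for some $r \in R$, and by comparing degrees (both generators live in degree $i$) we may take $r \in R_0 = k$, so $r = c$ for some nonzero $c \in k$. Thus $u + \lambda v = c u + c\mu v$ in $R_i$, and linear independence of $u, v$ over $k$ forces $c = 1$ and $\lambda = \mu$, a contradiction. The only subtlety in this step is the degree-comparison: an element $r$ with $r(u+\mu v) \in R_i$ and $(u+\mu v)$ a nonzerodivisor on its own multiples need not itself be homogeneous a priori, but writing $r = \sum r_j$ in homogeneous pieces, only $r_0(u+\mu v)$ can contribute to degree $i$ on the left (the lower-degree pieces of $r$ would produce nonzero contributions in degrees $< i$ if the corresponding $r_j(u+\mu v) \neq 0$, and one checks these cannot cancel against anything), so effectively $r_0 \in k$ is the relevant scalar; alternatively, since $R$ is a graded ring and $u + \lambda v$, $u + \mu v$ are both homogeneous of the same degree generating the same ideal, standard facts about graded rings give that they differ by a unit of degree $0$.

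The main obstacle I anticipate is precisely handling the possibility of zero-divisors so that the degree argument is airtight: if $u + \mu v$ is a zerodivisor it is conceivable that $u + \lambda v = r(u+\mu v)$ with $r$ genuinely inhomogeneous. I would resolve this by noting that even so, passing to homogeneous components and using that $R_i$-part of $r(u+\mu v)$ equals $r_0(u+\mu v)$ (since $u+\mu v \in R_i$ and $R$ is graded, the degree-$i$ component of the product is exactly $(r_0)(u+\mu v)$), we get $u + \lambda v = r_0(u + \mu v)$ as an identity in $R_i$, and now $r_0 \in R_0 = k$, so the linear-independence argument applies verbatim. Hence the $|k|$ ideals $(u + \lambda v)$, $\lambda \in k$, are pairwise distinct homogeneous principal ideals, which is the claim.
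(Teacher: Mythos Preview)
Your proposal is correct and follows essentially the same approach as the paper: pick two linearly independent elements of $R_i$, form the pencil of principal ideals $(u+\lambda v)$ indexed by $\lambda\in k$, and use the grading to reduce the equality of two such ideals to a scalar relation in $R_i$ that forces $\lambda=\mu$ by linear independence. Your discussion of the degree-comparison (taking the degree-$i$ component of $r(u+\mu v)$ to extract $r_0\in k$) is exactly the content of the paper's one-line assertion ``since $R$ is graded, there exists $\gamma\in k$ such that $x+\alpha y=\gamma(x+\beta y)$ in $R_i$,'' just spelled out more carefully.
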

	\begin{proof}
		Let $x,y$ be distinct basis elements of $R_i$ and let $\alpha,\beta \in k$.  Assume that 
		\[
			(x+\alpha y)=(x+\beta y).
		\]
	Since $R$ is graded, there exists $\gamma \in k$ such that $x+\alpha y = \gamma(x+\beta y)$ in $R_i$. Hence
	\begin{equation}\label{eq:linear-dep}
		(1-\gamma)x + (\alpha - \beta\gamma) y = 0
	\end{equation}
	in $R$. In particular, this relation holds in the vector space $R_i$ as $\alpha,\beta,\gamma\in k$.  Thus the coefficients of $x$ and $y$ are zero and we have that $\gamma = 1$ and $\alpha = \beta$.  Therefore the conclusion follows.
	\end{proof}

	We are now ready to prove the generalization of D. Eisenbud and J. Herzog's result.  

	\begin{thm}\label{thm:countSS}
		Let $(R,\m,k)$ be a standard graded \CM ring of dimension $d>0$ with uncountable residue field $k$.  If $R$ is of graded countable \CM type then it is \SuSp.
	\end{thm}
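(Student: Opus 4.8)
The plan is to prove the contrapositive: assuming $R$ is not \SuSp, I will exhibit $|k|$ modules in $\MCMgr R$ that are pairwise non-isomorphic up to shift, which contradicts graded countable \CM type since $|k|$ is uncountable. The first task is to pin down a convenient system of parameters. Negating Definition~\ref{def:gradedSS} gives a homogeneous system of parameters $(\lst{y}{d})$ and an index $i \gs D := \sum_j \deg(y_j) - d + 2$ with $\dim_k(R/(\lst{y}{d}))_i \gs 2$, and using Theorem~\ref{thm:SSequiv} I can moreover choose $(\lst{y}{d})$ and $i$ so that $\max_j \deg(y_j) < i$. If $R$ is not stretched, take $(\lst{y}{d})$ to be a homogeneous minimal reduction of $\m$ (all degrees one) and $i \gs 2$; if $R$ is stretched but $J\m^2 \neq \m^3$ for some homogeneous reduction $J$, then the $h$-vector is $(1,h_1,1,1,\dots,1)$ with at least four entries, and a short Hilbert-series computation shows that $(x_1^2,x_2,\dots,x_d)$ (for $\lst{x}{d}$ a minimal reduction) is a homogeneous system of parameters whose quotient has a two-dimensional degree-$3$ graded piece, with $D = 3$ and $\max_j \deg(y_j) = 2 < 3$. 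Write $\ol R = R/(\lst{y}{d})$, an Artinian graded ring with $\dim_k \ol R_i \gs 2$.

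Next I would build the family. By the argument proving Lemma~\ref{lem:ideals}, applied to $\ol R$, there are $k$-linearly independent $\bar f_1, \bar f_2 \in \ol R_i$ for which the homogeneous principal ideals $(\bar f_1 + \lambda \bar f_2)$, $\lambda \in k$, are pairwise distinct. Lifting to $f_1, f_2 \in R_i$ and setting $N_\lambda := R/(\lst{y}{d},\, f_1 + \lambda f_2) \cong \ol R/(\bar f_1 + \lambda \bar f_2)$: any graded isomorphism $N_\lambda \simeq N_\mu(n)$ must carry the degree-zero cyclic generator of one module onto that of the other, forcing $n = 0$ and $(\bar f_1 + \lambda \bar f_2) = (\bar f_1 + \mu \bar f_2)$, hence $\lambda = \mu$; so the $N_\lambda$ are pairwise non-isomorphic up to shift. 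Each $N_\lambda$ is Artinian and nonzero, so $\depth_R N_\lambda = 0$, and taking $M_\lambda := \Syz^d_R(N_\lambda)$ (a $d$-th syzygy in a minimal graded free resolution) the depth lemma gives $\depth_R M_\lambda = d$, i.e.\ $M_\lambda \in \MCMgr R$. (Those $\lambda$, if any, for which $N_\lambda$ has finite projective dimension over $R$, so that $M_\lambda$ is free, form a proper closed subset of $k$ --- $f_1, f_2$ can be chosen so some $N_\lambda$ has infinite projective dimension --- so discarding them leaves uncountably many $\lambda$.)

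The crux of the proof, and the step I expect to be the main obstacle, is to show that for $\lambda \neq \mu$ there is no integer $n$ with $M_\lambda \simeq M_\mu(n)$ in $\MCMgr R$. Following the outline of \cite{eisenbud88}, the idea is to recover $N_\lambda$ up to shift from $M_\lambda$ by reducing modulo $(\lst{y}{d})$. Because $M_\lambda$ is maximal \CMp, $(\lst{y}{d})$ is an $M_\lambda$-regular sequence, so $\Tor^R_p(\ol R, M_\lambda) = 0$ for $p > 0$ and a minimal graded free resolution of $M_\lambda$ over $R$ reduces mod $(\lst{y}{d})$ to a minimal graded free resolution of $M_\lambda/(\lst{y}{d})M_\lambda$ over $\ol R$. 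The condition $\max_j \deg(y_j) < i$ is what powers a syzygy-embedding lemma: the extra generator $f_1 + \lambda f_2$ lies in strictly higher degree than each $y_j$, so the syzygies of $(\lst{y}{d},\, f_1 + \lambda f_2)$ below a certain degree are exactly the Koszul syzygies of $(\lst{y}{d})$, while the first ``genuinely new'' syzygy records the colon ideal $(\lst{y}{d}) : (f_1 + \lambda f_2)$, equivalently $(0 :_{\ol R} (\bar f_1 + \lambda \bar f_2))$. Tracking degrees through the reduced resolution of $N_\lambda$, this should determine the ideal $(\bar f_1 + \lambda \bar f_2) \subseteq \ol R$, hence $\lambda$, from $M_\lambda$ up to shift, making $\lambda \mapsto [M_\lambda]$ injective on isomorphism classes modulo shift. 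Making this degree bookkeeping precise --- and doing so without assuming $R$ Gorenstein, where one cannot simply take cosyzygies back up --- is the principal difficulty.

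Finally, suppose $R$ had graded countable \CM type, with countably many indecomposables $P_1, P_2, \dots$ in $\MCMgr R$ up to shift. Every module in $\MCMgr R$ is a finite direct sum of indecomposables, hence is isomorphic to $\bigoplus_j P_{n_j}(a_j)$ for some finite sequence of pairs $(n_j, a_j) \in \N \times \Z$; normalizing by one global shift, there are only countably many such modules up to shift. This contradicts the uncountable family $\{M_\lambda\}_{\lambda \in k}$ of the previous step, so $R$ is \SuSp.
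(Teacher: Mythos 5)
Your overall strategy is the same as the paper's (which in turn follows Eisenbud--Herzog): from a homogeneous system of parameters $\lst y d$ violating \eqref{eqn:gradedSS} in degree $i$, form the cyclic modules $N_\lambda = R/(\lst y d,\ f_1+\lambda f_2)$ for $\lambda\in k$, pass to their $d$-th graded syzygies $M_\lambda$, and argue that distinct $\lambda$ give distinct classes, contradicting countable type. Two peripheral remarks: your detour through Theorem \ref{thm:SSequiv} to arrange $\max_j\deg(y_j)<i$ is unnecessary, since after normalizing $i=\sum_j\deg(y_j)-d+2$ (possible because the graded pieces of $R/(\lst y d)$ have dimension at least $2$ in all degrees between $2$ and $i$) the inequality is automatic, the other $d-1$ degrees each being at least $1$; and your closing counting argument (every graded maximal \CM module is a finite direct sum of shifts of the countably many indecomposables, so there are only countably many isomorphism classes in $\MCMgr R$ altogether) is correct and is a legitimate way to avoid the paper's step of isolating an indecomposable summand, for which the paper instead invokes Proposition \ref{prop:countable-type}.

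The genuine gap is exactly the step you flag as the principal difficulty, and it is where essentially all of the paper's work lies: showing that $M_\lambda\simeq M_\mu(n)$ in $\MCMgr R$ forces $\lambda=\mu$. Your sketch proposes to recover, from the low-degree syzygies, the colon ideal $(\lst y d):(f_1+\lambda f_2)$, i.e.\ $0:_{\ol R}(\ol f_1+\lambda \ol f_2)$, and asserts this ``should determine'' the ideal $(\ol f_1+\lambda \ol f_2)$. That invariant cannot work in general: distinct homogeneous principal ideals of the Artinian ring $\ol R$ may have identical annihilators --- for instance if $\ol R_{i+1}=0$, every $\ol f_1+\lambda\ol f_2$ is a socle element and the colon is the maximal ideal of $\ol R$ for every $\lambda$ --- so no degree bookkeeping on the colon can separate the $\lambda$'s, and $\ol R$ need not be Gorenstein, so one cannot dualize back. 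The paper's proof extracts a finer invariant in two steps: first, a degree-by-degree comparison of the minimal resolution of $R/I_\lambda$ with the Koszul complex on $\lst y d$ (using the inequalities $i+j>t_{j+1}$ at \emph{every} homological stage $j$, where $t_{j+1}$ is the largest sum of $j+1$ of the $\deg(y_l)$, not merely $\max_l\deg(y_l)<i$) shows $(M_\lambda)_{<t}=0$ and $\dim_k(M_\lambda)_t=1$ with $t=\sum_j\deg(y_j)$, so any graded isomorphism must match these distinguished elements; second, the identification $\Tor_d^R\bigl(R/(\lst y d),\,R/I_\lambda\bigr)\simeq (R/I_\lambda)(-t)$ inside $M_\lambda/(\lst y d)M_\lambda$, generated by the image of that distinguished element, shows its annihilator is the full ideal $I_\lambda=(\lst y d,\ f_1+\lambda f_2)$ itself rather than a colon; recovering $I_\lambda$ is what makes distinct $\lambda$ yield non-isomorphic modules (and incidentally disposes of your unsubstantiated parenthetical about discarding the $\lambda$ with finite projective dimension, since at most countably many $\lambda$ could then give free $M_\lambda$). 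Until an argument of this kind is supplied, the proposal does not prove the theorem.
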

	\begin{proof}
		Assume $R$ is not \SuS and let $x_1,\ldots,x_d$ be a homogeneous system of parameters such that
		\[
			\dim_k(R/(x_1,\ldots,x_d))_c \gs 2 
		\]
	for some 
		\begin{equation}\label{eqn:countSS1}
			c \gs \sum_{j=1}^d \deg(x_j) -d + 2.	
		\end{equation}
	Since $\dim_k(R/(x_1,\ldots,x_d))_c \gs 2$, then $\dim_k(R/(x_1,\ldots,x_d))_l \gs 2$ for all $2 \ls l \ls c$.  Therefore, without losing any generality, we can assume equality in \eqref{eqn:countSS1}.  Let $\ol R = R/(x_1,\ldots,x_d)$ and consider $\ol{y} \in (\ol{R})_c$.  Define $I_{\ol{y}} \subseteq R$ to be the preimage of $(\ol{y})$.  For each $\ol y \in \ol R_c$, we shall associate a graded maximal \CM module $M_{\ol y}$ such that the family $\{M_{\ol y}\}_{\ol y \in \ol R_c}$ has the following properties: 
	\begin{enumerate}
		\item\label{thm:countSS2} Let $(M_{\ol y})_i$ be the graded components of $M_{\ol y}$.  We have that $\dim_k(M_{\ol y})_{<t} = 0$ and $\dim_k(M_{\ol y})_t = 1$ where $t = \sum \deg(x_j)$.  
		\item\label{thm:countSS2.5} There is a unique indecomposable summand $N_{\ol y}$ of $M_{\ol y}$ such that $(N_{\ol y})_t = (M_{\ol y})_t$.
		\item\label{thm:countSS3} If $\ol{M_{\ol y}} = {M_{\ol{y}}/(\lst{x}{d})M_{\ol{y}}}$, then $\ann_R (\ol{M_{\ol y}})_t = I_{\ol{y}}$ where $t$ is as in \eqref{thm:countSS2}.  
	\end{enumerate}
	Assuming these three claims, we show there exist uncountably many graded indecomposable maximal \CM modules up to isomorphism in $\MCMgr R$.  Hence by Proposition \ref{prop:countable-type}, $R$ cannot be of graded countable \CM type.  

		As in \eqref{thm:countSS2.5}, let $N_{\ol y}$ and $N_{\ol y'}$ be the unique indecomposable summands of $M_{\ol y}$ and $M_{\ol y'}$ for $\ol{y},\ol{y}' \in \ol{R}_c$.   Suppose that there is an isomorphism $N_{\ol{y}} \simeq N_{\ol{y}'}$ in $\MCMgr R$. Thus we have that $Re \mapsto Re'$ where $e$ and $e'$ are generators of $(N_{\ol y})_t$ and $(N_{\ol y'})_t$ (respectively).  This implies
		\[
			N_{\ol{y}}/(x_1,\ldots,x_d)N_{\ol{y}} \simeq N_{\ol{y}'}/(x_1,\ldots,x_d)N_{\ol{y}'}.
		\]   
	Once again, under this isomorphism, $R\ol{e}\mapsto R\ol{e}'$. From \eqref{thm:countSS3}, we have that $\ann_R(\ol{e}) = I_{\ol{y}}$ and $\ann_R(\ol{e}') = I_{\ol{y}'}$, which forces $I_{\ol{y}} = I_{\ol{y}'}$.  Thus $(\ol{y}) = (\ol{y}')$.  Note that $\dim_k(\ol{R}_c) \gs 2$, so by Lemma \ref{lem:ideals} there exist uncountably many ideals $(\ol{y})$, where $\ol{y} \in \ol{R}_c$. As such there must be uncountably many graded indecomposable maximal \CM modules up to isomorphism in $\MCMgr R$ and we are finished by Proposition \ref{prop:countable-type}.

	To show property \eqref{thm:countSS2}, consider the Koszul complex $\mathcal K$ of the homogeneous system of parameters $x_1,\ldots,x_d$, 
	\[
		\xymatrixrowsep{5mm}
		\xymatrixcolsep{8mm}
		\xymatrix
			{
				\mathcal K:  0 \ar[r] & K_d \ar[r] & \cdots \ar[r] & K_1 \ar[r]  & R \ar[r] &  R/(x_1,\ldots,x_d)R  \ar[r] & 0.
			}
	\]
	Note that for $J\subseteq \{1,2,\ldots,d \}$, $ K_i \simeq \bigoplus_{|J| = i} R (-\sum_{j\in J} \deg{x_j})$.  Let $\Omega_i$ be the $i^{th}$ syzygy of $\mathcal K$ and fix $\ol y \in (\ol R)_c$.  Further, let $I_{\ol{y}} = (\lst x d , y)$ be the preimage of $(\ol{y})$ and consider the minimal resolution $\mathcal F$ of $R/I_{\ol y}$.  From $\mathcal F$, we have the short exact sequence
	\[
		\xymatrixrowsep{5mm}
		\xymatrixcolsep{8mm}
		\xymatrix
			{
				0 \ar[r] & M_2 \ar[r] & \bigoplus R(-\deg(x_i))\oplus R(-c) \ar[r] & (\lst{x}{d}, y) \ar[r] & 0
			}
	\]
	where $M_2$ is the second syzygy of $R/I_{\ol y}$. As such, consider the commutative diagram of degree zero maps

	\[
		\xymatrixrowsep{5mm}
		\xymatrixcolsep{8mm}
		\xymatrix
			{
				0 \ar[r] & \Omega_2 \ar[r] \ar@{^{(}->}[d] & K_1 \ar[r]\ar@{^{(}->}[d] & (\lst{x}{d})_{\ } \ar[r]\ar@{^{(}->}[d] & 0 \\
				0 \ar[r] & M_2 \ar[r] & \bigoplus R(-\deg(x_j))\oplus R(-c) \ar[r] & (\lst{x}{d}, y) \ar[r] & 0
			}
	\]
and define
	\(
		 t_i := \max_{|J| = i}\left\{ \sum_{j \in J} \deg(x_j) \right\}
	\)
for $1 \ls i \ls d$. Notice that 
    \[
        c + i = \sum_{j=1}^d \deg(x_j) -(d-(i+1)) + 1 > t_{i+1}.
    \]

Let $z \in M_2$ be a non-zero element such that $\deg(z) \ls t_2$ (this element exists as $\Omega_2 \hookrightarrow M_2$).  We claim that $z \in \im(\Omega_2 \hookrightarrow M_2)$. If not, then $\deg(z) \gs c+1$ as the degrees of elements in $M_2$ that depend on $y$ are bounded below by $c+1$.  However $c+1 > t_2$, a contradiction.  Hence we can write $M_2 = \im(\Omega_2) + \bigoplus_{i> t_2}(M_2)_i$.

    By induction on the homological degree, along with the fact that $c+i > t_{i+1}$, we have that $\Omega_{d-1} \hookrightarrow M_{d-1}$ and $M_{d-1} = \im(\Omega_{d-1}) + \bigoplus_{i>t_{d-1}} (M_{d-1})_i$.  This gives rise to the commutative diagram of degree zero maps,
    \[
		\xymatrixrowsep{5mm}
		\xymatrixcolsep{8mm}
		\xymatrix
			{
				0 \ar[r] & \Omega_d \ar[r] \ar@{^{(}->}[d] & K_{d-1} \ar[r]\ar@{^{(}->}[d] & \Omega_{d-1} \ar[r]\ar@{^{(}->}[d] & 0 \\
				0 \ar[r] & M_d \ar[r] & \ds \bigoplus_{|J| = d-1} R(-\sum_{j\in J} \deg(x_j))\oplus \bigoplus R(-a_{d-1,i}) \ar[r] & M_{d-1} \ar[r] & 0 .
			}
	\]
As above, we chose $z \in M_d$ such that $\deg(z) \ls t_d$.  Since $c+d > t_d$, we have that $z\in \im(\Omega_d \hookrightarrow M_d)$.  Since $\Omega_d$ is the $d^\text{th}$ syzygy of $\mathcal K$, we know that $\Omega_d = R(-t_d)$.  It follows that $\dim_k(M_d)_{t_d} = 1$ and $\dim_k(M_d)_{<t_d} = 0$. We denote $M_d$ by $M_{\ol y}$ and $t_d$ by $t$.

		Property \eqref{thm:countSS2.5} is a straightforward consequence of \eqref{thm:countSS2}.  As there is a unique element of minimal minimal degree in $M_{\ol y}$, say $Re = (M_{\ol y})_t$, it must be contained in a unique indecomposable summand $N_{\ol y}$ of $M_{\ol y}$.  Therefore $(N_{\ol y})_t = (M_{\ol y})_t$.

		To prove \eqref{thm:countSS3}, let $\ol{\mathcal F} = \mathcal F\otimes R/(\lst{x}{d})R$ and consider $\Tor_d^R(R/(\lst{x}{d}),R/I_{\ol y})$.  Since $R$ is \CMp, any system of parameters is a regular sequence.  Therefore,
		\[
			\Tor_d^R(R/(\lst{x}{d}),R/I_{\ol y}) \simeq H_d(\lst{x}{d}; R/I_{\ol{y}}), 
		\]	
	where $H_d(\lst{x}{d}; R/I_{\ol{y}})$ is the $d^{th}$ homology of the Koszul complex of the homogeneous system of parameters $\lst{x}{d}$ with values in $R/I_{\ol{y}}$.  Since the $x_i$ annihilate $R/I_{\ol{y}}$, we have
	\[
		H_d(x_1,\ldots,x_d; R/I_{\ol{y}}) \simeq R/I_{\ol{y}}(-t).
	\]
	Apply $\cdot \otimes_R R/(\lst{x}{d})$ to the short exact sequence 
	\[
		\xymatrixrowsep{5mm}
		\xymatrixcolsep{8mm}
		\xymatrix
			{
				 0 \ar[r] & M_{\ol y} \ar[r] & F_{d-1} \ar[r] & M_{d-1} \ar[r] & 0
			}
	\]
	to get
	\[
		\xymatrixrowsep{5mm}
		\xymatrixcolsep{8mm}
		\xymatrix
			{
				0 \ar[r] &  \Tor_1(R/(\lst{x}{d}),M_{d-1}) \ar[r] & M_{\ol y}/(\lst{x}{d})M_{\ol y} \ar[r] & \ol{F}_{d-1}.
			}
	\]
	Since $\Tor_1(R/(\lst{x}{d}),M_{d-1}) \simeq \Tor_d(R/\lst{x}{d},R/I_{\ol y})$, we have
	\[
		\xymatrix
			{
				0 \ar[r] &  R/I_{\ol{y}}(-t) \ar[r] & M_{\ol y}/(\lst{x}{d})M_{\ol y} \ar[r]^-\alpha & \ol{F}_{d-1}.
			}
	\]
	Since $e\in M_{\ol y}$ corresponds to the generator of $K_d$, it is clear that $\ol{e}\mapsto 0$, and it follows from the exact sequence that $R\ol{e} \simeq R/I_{\ol{y}}(-t)$.
	\end{proof}

	\begin{rem}
		It is worth noting that we are able to lift the restriction of the isomorphism classes in the definition of graded countable \CM type by way of Proposition \ref{prop:countable-type}.  Thus Theorem \ref{thm:countSS} holds if we have countably many graded indecomposable modules up to isomorphism in $\MCM R$.  This is a much stronger statement as we do not require graded isomorphisms.
	\end{rem}

	\begin{cor}
		Let $(R,\m,k)$ be a standard graded \CM ring with uncountable residue field $k$.  If $R$ is of graded finite \CM type then it is \SuSp.
	\end{cor}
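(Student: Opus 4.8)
The plan is to deduce the corollary from Theorem~\ref{thm:countSS}, supplying a short separate argument in dimension zero. Any ring of graded finite \CM type has in particular at most countably many indecomposable modules in $\MCMgr R$ up to shift in degree, so it is of graded countable \CM type. Hence if $\dim R>0$, Theorem~\ref{thm:countSS} applies directly and shows that $R$ is \SuSp; there is nothing further to do in that case.

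It remains to handle $\dim R=0$, which Theorem~\ref{thm:countSS} explicitly excludes. Here $R$ is standard graded Artinian, every finitely generated graded module is maximal \CM, and --- taking the empty system of parameters, for which $\sum\deg(x_j)-d+2=2$ --- the ring $R$ is \SuS precisely when $\dim_k R_i\ls 1$ for all $i\gs 2$. Suppose this fails, say $\dim_k R_c\gs 2$ for some $c\gs 2$. I would then note that the module construction in the proof of Theorem~\ref{thm:countSS} degenerates, when $d=0$, to $M_{\ol y}=R/(y)$ for $\ol y=y\in R_c$: the relevant Koszul complex is trivial, $t=\sum\deg(x_j)=0$, and $M_{\ol y}$ is the ``$0$th syzygy'' $R/I_{\ol y}=R/(y)$, a cyclic module over the graded-local ring $R$, hence indecomposable in $\MCMgr R$, minimally generated in degree $0$, with $\ann_R(\ol 1)=(y)$. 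Consequently any isomorphism $R/(y)\simeq (R/(y'))(n)$ in $\MCMgr R$ forces $n=0$ (both sides are minimally generated in degree $0$) and then $(y)=\ann_R(\ol 1)=(y')$. By Lemma~\ref{lem:ideals} there are at least $|k|$, hence uncountably many, distinct homogeneous principal ideals $(y)$ with $y\in R_c$, so $R$ would carry uncountably many pairwise non-isomorphic indecomposables in $\MCMgr R$ up to shift --- contradicting the graded countable \CM type established above. Therefore $R$ is \SuS in dimension zero as well, and the corollary follows.

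The only genuine work lies in the zero-dimensional case, and within it the one point requiring care is checking that the module machinery of Theorem~\ref{thm:countSS} collapses correctly when $d=0$, so that $M_{\ol y}=R/(y)$ and $t=0$; after that the argument is formal and uses Lemma~\ref{lem:ideals} in exactly the same way as in that theorem. For $\dim R>0$ the corollary is a one-line consequence of Theorem~\ref{thm:countSS}, with no obstacle at all.
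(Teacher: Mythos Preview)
Your proposal is correct. The paper gives no proof for this corollary at all, treating it as an immediate consequence of Theorem~\ref{thm:countSS}: graded finite \CM type trivially implies graded countable \CM type, and the theorem then applies. Since Theorem~\ref{thm:countSS} carries the hypothesis $d>0$, the corollary is presumably meant to inherit that restriction implicitly (as does the next corollary, Corollary~\ref{cor:countSS}, whose one-line proof also invokes results stated only for $d>0$).

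Your treatment of the $d>0$ case is therefore identical to the paper's intended argument. Your separate handling of $d=0$ goes beyond what the paper does; it is correct and self-contained (cyclic modules over a graded-local ring are indecomposable, a degree-zero isomorphism $R/(y)\simeq (R/(y'))(n)$ forces $n=0$ by comparing generator degrees and then $(y)=(y')$ by comparing annihilators, and Lemma~\ref{lem:ideals} finishes it). This extra case is a genuine, if minor, strengthening of what the paper states, removing the need to read a dimension hypothesis into the corollary from context.
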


	\begin{cor}\label{cor:countSS}
		Let $(R,\m,k)$ be a standard graded \CM ring with graded countable \CM type.  Then the possible $h$-vectors are $(1)$, $(1,n)$, or $(1,n,1)$ for some integer $n$.  
	\end{cor}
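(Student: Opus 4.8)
The plan is to reduce the graded countable type case to the infinite residue field case already handled by Corollary \ref{cor:SShvector}. The statement of Corollary \ref{cor:countSS} drops the standing assumption that $k$ is uncountable (it merely requires $R$ standard graded Cohen-Macaulay of graded countable type), so the first issue is that Theorem \ref{thm:countSS} was proved only for uncountable $k$. Since $h$-vectors are insensitive to extension of the base field, the natural move is to pass to $R \otimes_k K$ for a suitably large field extension $K \supseteq k$: if $K$ is chosen uncountable, then $R' := R \otimes_k K$ is again a standard graded Cohen-Macaulay ring with uncountable residue field $K$, and $h(R') = h(R)$ because tensoring with a field is exact and preserves Hilbert functions of the Artinian reduction. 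So it suffices to know that graded countable type descends along such a base change, i.e.\ that $R'$ is again of graded countable type. If that holds, apply Theorem \ref{thm:countSS} to $R'$ to conclude $R'$ is \SuSp, then apply Corollary \ref{cor:SShvector} to $R'$ (which has infinite, indeed uncountable, residue field) to get that $h(R') \in \{(1),(1,n),(1,n,1)\}$, and finally transport this back to $h(R)$.

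The key steps, in order, are: (1) fix an uncountable field extension $K \supseteq k$ and set $R' = R \otimes_k K$; verify $R'$ is standard graded, Noetherian, Cohen-Macaulay of the same dimension, with residue field $K$, and $h(R') = h(R)$ — all routine from flatness of field extensions. (2) Show $R'$ has graded countable \CM type: given a graded maximal \CM $R'$-module $M'$, it is a direct summand of $N \otimes_k K$ for some graded f.g.\ $R$-module $N$ (e.g.\ $N$ obtained by a graded presentation of $M'$ with matrix entries lying in $R_0'$-combinations of elements one can push into a countably generated subfield), and graded maximal \CM $R$-modules form a countable family up to shift; tracking how many summands and presentations arise over a countable subfield of $K$ gives only countably many isomorphism classes of such $M'$ up to shift. (3) Invoke Theorem \ref{thm:countSS} on $R'$, then Corollary \ref{cor:SShvector} on $R'$, and read off $h(R) = h(R')$.

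The main obstacle is step (2): the descent of graded countable type along an uncountable base field extension. Making this precise requires a "finite presentation over a small field" argument — every graded maximal \CM $R'$-module, together with the data of its decomposition, is defined over a countably generated subextension $k \subseteq L \subseteq K$, so the set of isomorphism-up-to-shift classes over $R'$ injects into a countable union (over countable subfields $L$, of which there are controllably many) of the corresponding sets over $R \otimes_k L$, each of which is countable by an Auslander-Reiten-type argument as in Section \ref{section:graded-case}. One must be slightly careful that passing from $R \otimes_k L$ up to $R'$ does not split a module into infinitely many new pieces, but since any fixed module has only finitely many indecomposable summands and each summand remains defined over a further countable extension, the bookkeeping closes. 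An alternative, if one prefers to avoid this descent entirely, is to restrict the corollary's hypothesis to the infinite (or uncountable) residue field case, in which case the proof collapses to a one-line citation of Theorem \ref{thm:countSS} followed by Corollary \ref{cor:SShvector}; given how the corollary is phrased, I expect the intended proof is exactly this short combination, implicitly using that the earlier results already cover the relevant case.
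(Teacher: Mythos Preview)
Your closing sentence is exactly right, and it is the entire content of the paper's proof: combine Theorem~\ref{thm:countSS} with Corollary~\ref{cor:SShvector}. What you have missed is that the paper fixes a standing convention in Section~\ref{sec:prelim-def}: unless otherwise stated, $(R,\m,k)$ denotes a standard graded ring with $k$ an \emph{uncountable} field. Corollary~\ref{cor:countSS} does not override this, so the hypothesis of Theorem~\ref{thm:countSS} is already in force and no base-change manoeuvre is needed.

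Separately, your step~(2) has a real gap, not just a bookkeeping wrinkle. What you need is \emph{ascent} of graded countable \CM type along the extension $R \to R' = R\otimes_k K$, and this is not a formality. Your sketch argues that each graded maximal \CM $R'$-module is defined over some countably generated intermediate field $L$, and that over each such $R\otimes_k L$ there are only countably many classes. But an uncountable extension $K/k$ contains uncountably many countably generated subfields, so you are taking an uncountable union of countable sets; the cardinality bound does not close. Nor is it clear why $R\otimes_k L$ should itself have countable type, which your sketch seems to assume---that is the same ascent question one level down. The assertion that every $M'$ is a summand of $N\otimes_k K$ for some graded $R$-module $N$ is likewise unsupported: a presentation of $M'$ has entries in $R'$, not in $R$, and pushing them into $R$ is exactly the descent problem you are trying to solve. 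In short, preservation of countable \CM type under base-field extension is a genuine question, not something a soft argument will deliver; fortunately, the standing uncountability hypothesis makes the whole detour unnecessary here.
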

	\begin{proof}
		Combine Theorem \ref{thm:countSS} and Corollary \ref{cor:SShvector}.
	\end{proof}

\subsection{Minimal Multiplicity and Graded Countable Type}\label{sec:min-mult}

		In \cite{eisenbud88}, D. Eisenbud and J. Herzog showed that standard graded rings of graded finite \CM type and $\dim(R) > 1$ have minimal multiplicity.  Using Theorem \ref{thm:countSS}, we are able to extend this result to non-Gorenstein rings of graded countable \CM type with $\dim(R) >2$.

	In order to prove Proposition \ref{prop:dim3-min-mult}, we need a countable version of M. Auslander and I. Reiten's result: rings of graded finite \CM type have an isolated singularity \cite[Proposition 4]{auslander89}.  In \cite[Theorem 1.3]{huneke03}, C. Huneke and G. Leuschke show that the singular locus of a local ring of countable \CM type has dimension at most one.  The proof of the graded version of \cite[Theorem 1.3]{huneke03} can be found in \cite[Theorem 2.5.9]{mythesis} and is essentially a graded analog of the local version.

	\begin{prop}\label{prop:dim3-min-mult}
		Let $(R,\m,k)$ be a standard graded \CM ring of graded countable \CM type that is not Gorenstein and $\dim R \gs 3$.  Then $R$ must be a domain and have minimal multiplicity. 
	\end{prop}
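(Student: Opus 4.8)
The plan is to combine the super-stretched machinery of the previous sections with the graded Huneke--Leuschke bound on the singular locus. By Theorem~\ref{thm:countSS}, $R$ is \SuSp, so by Corollary~\ref{cor:SShvector} its $h$-vector is $(1)$, $(1,n)$, or $(1,n,1)$. The value $(1)$ forces $e(R)=1$, hence $R$ regular, hence Gorenstein, against the hypothesis; and $(1,n)$ is exactly minimal multiplicity. So it remains to show that $R$ is a domain and that $h(R)=(1,n,1)$ cannot occur.

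\emph{Domain.} Here I would invoke the graded analogue of \cite[Theorem~1.3]{huneke03}, namely \cite[Theorem~2.5.9]{mythesis}, which gives $\dim\Sing{R}\le 1$. Since $\dim R\ge 3$, every prime $\p$ of height at most $1$ has $R_\p$ regular, so $R$ satisfies Serre's condition $R_1$; being \CMp, it also satisfies $S_2$, so $R$ is normal by Serre's criterion. A standard graded ring has no idempotents other than $0$ and $1$ (if $e=e_0+e_1+\cdots$ with $e^2=e$, then $e_0\in\{0,1\}$, and whichever of $e$, $1-e$ lies in $\m$ must lie in $\bigcap_n\m^n=0$), so $\Spec R$ is connected; a normal ring with connected spectrum is a domain. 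It is precisely here that $\dim R\ge 3$ is needed for normality --- for $\dim R=2$ the bound gives only generic reducedness.

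\emph{Excluding $h(R)=(1,n,1)$.} First $n\ge 2$: if $n=1$, then $R/J$ for a homogeneous minimal reduction $J$ has Hilbert function $(1,1,1)$, hence is $\cong k[t]/(t^3)$, a complete intersection, and $R$ would be Gorenstein. So fix a homogeneous minimal reduction $J=(x_1,\dots,x_d)$ (each $\deg x_i=1$) and set $\bar R=R/J$; then $\bar\m^3=0$, the component $\bar R_2$ is $1$-dimensional, and multiplication is a symmetric bilinear form $\bar R_1\times\bar R_1\to\bar R_2\cong k$ on the $n$-dimensional space $\bar R_1$. Non-Gorensteinness forces this form to be degenerate, so there is $v\in R_1\setminus J$ with $vR_1\subseteq J$ (hence $v\m\subseteq J$); equivalently, $v$ reduces to a socle element of $\bar R$ of degree one. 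From this degree-one datum I would build uncountably many pairwise non-isomorphic indecomposable graded maximal \CM modules, adapting the construction in the proof of Theorem~\ref{thm:countSS} (itself patterned on \cite[Theorem~A]{eisenbud88}): since $\dim_k\bar R_1=n\ge 2$, Lemma~\ref{lem:ideals} supplies uncountably many distinct homogeneous ideals with classes in degree one, and to each such ideal $I$ of $R$ one attaches, through its syzygies together with the Koszul complex on $x_1,\dots,x_d$, a graded \CM module on which $I$ is recovered as the annihilator of a prescribed graded strand, so that distinct ideals yield non-isomorphic indecomposable summands. This contradicts graded countable \CM type by Proposition~\ref{prop:countable-type}, leaving $h(R)=(1,n)$, i.e.\ minimal multiplicity.

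The main obstacle is exactly this last construction. Because $R$ is \SuSp, there is no high-degree component of any $R/(\text{h.s.o.p.})$ of dimension $\ge 2$ to feed the usual ``too many modules'' mechanism; the only room lives in degree one, in the degenerate form produced by non-Gorensteinness, and one must re-engineer the Eisenbud--Herzog / Theorem~\ref{thm:countSS} argument so that the degree-one socle element $v$ plays the role of the high-degree element there. Checking that the indecomposable summands one extracts are genuinely pairwise non-isomorphic --- and, presumably, that the normality just established can be brought to bear --- is where the real content of the proposition lies.
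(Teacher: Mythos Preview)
Your domain argument is essentially the paper's: bound the singular locus via the graded Huneke--Leuschke theorem, deduce $(R_1)$ from $\dim R\ge 3$, combine with $(S_2)$ to get normality, and use connectedness of $\Spec R$ (equivalently, $R_0=k$) to conclude $R$ is a domain. No issues there.

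The divergence is in excluding $h(R)=(1,n,1)$. You correctly observe that non-Gorensteinness forces a socle element of degree one in the artinian reduction, and you then propose to re-engineer the Theorem~\ref{thm:countSS} construction around that element. You flag this as the ``main obstacle,'' and your proof is admittedly a program rather than an argument at this point. The paper does \emph{not} carry out any such construction. Instead, it observes that the existence of a degree-one socle element, together with the domain property already established, puts $R$ squarely in the hypotheses of \cite[Theorem~B]{eisenbud88}, which asserts (for a homogeneous \CM domain of dimension $\ge 2$) that a degree-one socle element in the artinian reduction forces $\m^2=J\m$ for a minimal reduction $J$, i.e.\ minimal multiplicity. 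So the ``real content'' you are worried about is already in the literature, and the reason the paper first proves $R$ is a domain is precisely to unlock that citation; the hypothesis $\dim R\ge 3$ is doing double duty, supplying normality for the domain step \emph{and} the $\dim R\ge 2$ needed for Theorem~B.

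In short: your outline is sound, but the step you identify as hard is dispatched by a single reference, not by a new module-theoretic construction. If you want to make your write-up self-contained, what you would actually be reproving is \cite[Theorem~B]{eisenbud88}, and you should look there rather than at Theorem~\ref{thm:countSS} for the template.
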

	\begin{proof}
		Since $R$ is of graded countable \CM type, we know that the dimension of the singular locus is at most one.  Since $\dim(R) \gs 3$, we have that $R$ satisfies Serre's condition $(R_1)$.  Further, as $R$ is \CMp, we know that $R$ also satisfies Serre's condition $(S_2)$.  Thus by Serre's criterion, $R$ must be normal.  By normality, we can write it as a finite direct product of integrally closed domains \cite[Lemma 2.1.15]{swanson}.  As $R$ is standard graded, we have that $R_0 = k$ and thus there is only one term in the direct product. Hence $R$ is also a domain. 

		By Corollary \ref{cor:countSS}, we know that $R$ is either of minimal multiplicity or has $h$-vector $(1,n,1)$ for some positive integer $n$.  Since $R$ is not Gorenstein, there must be a socle element in degree one.  However, \cite[Theorem B]{eisenbud88} forces $R$ to have minimal multiplicity.
	\end{proof}

	\begin{rem}\label{rem:dim3-min-mult}
		It is worth noting that standard graded \CM rings of countable \CM type and dimension at least 3 are normal domains.  Even though Proposition \ref{prop:dim3-min-mult} assumed the ring was not Gorenstein, the argument to show that the ring was a normal domain still holds.
	\end{rem}

	\section{Gorenstein Rings of Graded Countable Type}

	It turns out that there are a few instances when stretched and \SuS coincide.  In particular, this happens when the ring in question is zero-dimensional or when the ring is a complete intersection that is not a hypersurface.  

	For zero-dimensional ring the empty set is a system of parameters.  Hence, we see that the expression $\sum \deg(x_i)-d+2$ in the definition of \SuS becomes just 2.  From here it is easy to see that the two definitions are equivalent.  A little more work is needed to see this for complete intersections.

	\begin{prop}\label{prop:ss-ci}
		Let $(R,\m,k)$ be a standard graded complete intersection that is stretched with $k$ an infinite field.  Then $R$ is a hypersurface or defined by two quadrics.
	\end{prop}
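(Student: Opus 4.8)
The plan is to pit the $h$-vector forced by ``stretched'' against the $h$-vector imposed by the complete intersection structure. First I would fix a minimal homogeneous presentation $R = S/(f_1,\dots,f_c)$, where $S = k[x_1,\dots,x_n]$ is a polynomial ring over $k$, $n = \dim_k(\m/\m^2)$ is the embedding dimension, $f_1,\dots,f_c$ is a regular sequence of homogeneous forms each of degree $d_i \gs 2$ (automatic for a minimal presentation), and $c = n - \dim R$ is the codimension. If $c \ls 1$ then $R$ is a hypersurface and there is nothing to prove, so I assume $c \gs 2$; the goal then becomes to show $c = 2$ and $d_1 = d_2 = 2$, so that $R$ is defined by two quadrics.

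Next I would compute the $h$-vector two ways. Since $R$ is a complete intersection,
\[
	H_R(t) = \frac{\prod_{i=1}^c (1-t^{d_i})}{(1-t)^{n}} = \frac{\prod_{i=1}^c \left(1 + t + \cdots + t^{d_i - 1}\right)}{(1-t)^{\dim R}},
\]
so the $h$-polynomial of $R$ is $f(t) = \prod_{i=1}^c \left(1 + t + \cdots + t^{d_i-1}\right)$, and $h(R)$ is its coefficient sequence. On the other hand, since $k$ is infinite there is a homogeneous minimal reduction $\x$ of $\m$ consisting of linear forms (the empty sequence if $\dim R = 0$); as $R$ is \CMp, $\x$ is a regular sequence, so $\ol R = R/\x$ is standard graded Artinian with Hilbert function equal to $h(R)$, and ``stretched'' says $\dim_k \ol R_i \ls 1$ for all $i \gs 2$. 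Because $\ol R$ is generated in degree one, its Hilbert function has no internal zeros, so $h(R) = (1, a, 1, \dots, 1)$ for some $a$; in particular the coefficient of $t^2$ in $f(t)$ is at most $1$.

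Finally I would extract that coefficient from the product. Each factor $1 + t + \cdots + t^{d_i-1}$ contributes $1$ to the coefficient of $t$ and, precisely when $d_i \gs 3$, also $1$ to the coefficient of $t^2$; expanding $f(t)$, the coefficient of $t^2$ is therefore $\binom{c}{2} + \#\{\, i : d_i \gs 3 \,\}$. Since $c \gs 2$ this is at least $\binom{c}{2} \gs 1$, so the bound ``$\ls 1$'' forces $\binom{c}{2} = 1$ and $\#\{\, i : d_i \gs 3 \,\} = 0$; that is, $c = 2$ and $d_1 = d_2 = 2$, so $R$ is defined by two quadrics. (The converse directions are immediate: $1 + t + \cdots + t^{d-1}$ is the $h$-polynomial of any graded hypersurface and $(1+t)^2 = 1 + 2t + t^2$ that of a ring defined by two quadrics, and both coefficient sequences satisfy the stretched condition, so the two listed cases are exactly the stretched complete intersections.)

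I do not expect a serious obstacle here; the only points needing a little care are the reduction to a minimal presentation with each $f_i$ of degree $\gs 2$ (which is what makes $\binom{c}{2}$ the unavoidable part of the degree-two coefficient) and the remark that a standard graded Artinian algebra has a connected Hilbert function, which is what lets the stretched hypothesis be tested on the single coefficient $h_2$.
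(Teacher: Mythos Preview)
Your proof is correct and shares the paper's overall strategy: write the $h$-polynomial of the complete intersection as $\prod_{i}(1+t+\cdots+t^{d_i-1})$ and confront it with the stretched constraint. The difference lies in the extraction step. The paper first invokes the Gorenstein property of a complete intersection (symmetry of the $h$-vector) to conclude that a stretched $h$-vector must be $(1,1)$ or $(1,N,1)$, and then reads off from the \emph{degree} of the product that, in the non-hypersurface case, $\sum(d_i-1)=2$ with at least two factors, forcing $c=2$ and $d_1=d_2=2$. You bypass the symmetry argument entirely by computing the coefficient of $t^2$ explicitly as $\binom{c}{2}+\#\{\,i:d_i\ge 3\,\}$ and bounding it by $1$. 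Your route is slightly more self-contained (no appeal to Gorenstein duality) and makes the role of the minimal presentation hypothesis $d_i\ge 2$ more visible; the paper's route has the small advantage of immediately pinning down the full $h$-vector shape. Either way the conclusion is the same.
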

	\begin{proof}
		Let $S = k[\lst y n]$ and $R = S/(\lst f m)$ with $\dim(R) = d$ and $\deg(f_i) = d_i$.  Further, let $\x = (\lst x d)$ be a minimal reduction of the maximal ideal $\m$.  Given that $R$ is a complete intersection, we know that the Hilbert series of $R/\x$ is
		\begin{align}
			H_{\frac{S}{(\lst f m, \lst x d)}}(t) &= \frac{(1-t^{d_1})\cdots(1-t^{d_m})\cdot(1-t)^d}{(1-t)^n} \nonumber \\
				&= (1+t+\cdots+t^{d_1-1})\cdots(1+t+\cdots+t^{d_m-1}). \label{eq:hfun}
		\end{align}
	As $R$ is Gorenstein and stretched, we know that the $h$-vector is of the form $(1,1)$ or $(1,N,1)$ for some $N > 0$.  It is enough to only consider the $h$-vector $(1,N,1)$.  

		If the $h$-vector is $(1,N,1)$, then \eqref{eq:hfun} is of the form $1 + Nt + t^2$.  In particular, the only case to consider is when $N = 2$.  In this case, \eqref{eq:hfun} is $(1+t)(1+t)$ and thus the ideal $I$ is generated by two quadrics.
	\end{proof}

		Example \ref{ex:stretched-ss} showed that a hypersurface can be stretched but not \SuSp.  As it turns out, this is not the case when the ring is a complete intersection defined by 2 quadrics.  Recall that for a \CM ring $R$ with $h$-vector 
		\[
		(h_{\ol R}(0), h_{\ol R}(1), \ldots, h_{\ol R}(s)),
		\]
	the \dfn{socle degree} of $R$ is defined to be $\text{SocDeg}(R) = s$.

	\begin{cor}\label{cor:ss-ci}
		Let $(R,\m,k)$ be a standard graded complete intersection that is not a hypersurface. Then $R$ is stretched if and only if $R$ is \SuSp.
	\end{cor}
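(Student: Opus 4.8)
The plan is to prove the two implications separately; the direction ``\SuSp\ $\Rightarrow$ stretched'' is immediate, while ``stretched $\Rightarrow$ \SuSp'' is where the complete-intersection hypothesis is used.

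For ``\SuSp\ $\Rightarrow$ stretched'': since $k$ is infinite, this is exactly Remark~\ref{rem:SS-is-streched}, and no assumption on $R$ beyond being standard graded \CM is needed.

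For ``stretched $\Rightarrow$ \SuSp'': assume $R$ is a stretched complete intersection that is not a hypersurface. First I would apply Proposition~\ref{prop:ss-ci}: with $k$ infinite, a stretched complete intersection is either a hypersurface or defined by two quadrics, so our $R$ is defined by two quadrics, say $R = k[\lst{y}{n}]/(f_1,f_2)$ with $f_1,f_2$ a regular sequence of forms of degree $2$. Computing the Hilbert series,
\[
	H_R(t) = \frac{(1-t^2)^2}{(1-t)^n} = \frac{(1+t)^2}{(1-t)^{n-2}},
\]
so $\dim R = n-2 =: d$ and the $h$-vector of $R$ is $(1,2,1)$. Next I would pick a homogeneous minimal reduction $J = (\lst{x}{d})$ of $\m$ with each $\deg x_i = 1$ (available since $k$ is infinite). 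The Artinian quotient $R/J$ then has Hilbert series $1 + 2t + t^2$, so $(R/J)_i = 0$ for all $i \gs 3$; since $J$ is generated in degree one this reads $R_i = J_1 R_{i-1}$ for $i \gs 3$, and summing over $i$ gives $\m^3 = J\m^2$. Finally I would invoke the implication $(\ref{thm:SSequiv3}) \Rightarrow (\ref{thm:SSequiv1})$ of Theorem~\ref{thm:SSequiv}: $R$ is stretched and there is a homogeneous reduction $J$ of $\m$ with $J\m^2 = \m^3$, hence $R$ is \SuSp.

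I do not anticipate a real obstacle: the whole argument is a concatenation of Proposition~\ref{prop:ss-ci}, a one-line Hilbert-series computation, and Theorem~\ref{thm:SSequiv}. The only point to handle with a little care is the passage from ``$(R/J)_i = 0$ for $i \gs 3$'' to ``$\m^3 = J\m^2$'', which uses that $J$ is generated in degree one so that $(JR)_i = J_1 R_{i-1}$; this is routine for a standard graded ring.
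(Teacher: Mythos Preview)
Your proof is correct and takes a genuinely different route from the paper's. The paper verifies Definition~\ref{def:gradedSS} directly: for an \emph{arbitrary} homogeneous system of parameters $\x = (\lst x d)$, the quotient $R/\x$ is again a graded Artinian complete intersection, so its socle degree is $\deg f_1 + \deg f_2 + \sum \deg x_j - n = \sum \deg x_j - d + 2$; being Gorenstein, the top graded piece is one-dimensional and all higher pieces vanish, so \eqref{eqn:gradedSS} holds on the nose. You instead go through the characterization in Theorem~\ref{thm:SSequiv}: from the $h$-vector $(1,2,1)$ you read off $J\m^2 = \m^3$ for a linear minimal reduction $J$, and then invoke $(\ref{thm:SSequiv3}) \Rightarrow (\ref{thm:SSequiv1})$. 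Your route leans on the machinery behind Theorem~\ref{thm:SSequiv} (Propositions~\ref{lem:sopMap}--\ref{prop:stretchFrob}), while the paper's route is more self-contained, needing only the standard socle-degree formula for a graded complete intersection. One small caveat: Theorem~\ref{thm:SSequiv} is stated for $d>0$, so to be complete you should note separately that in dimension zero stretched and \SuS coincide trivially (as observed at the beginning of this section).
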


	\begin{proof}
		It is enough to show that stretched implies \SuSp.  To do this we show that the socle degree of $R$ modulo a homogeneous system of parameters is not too large.  Since $R$ is not a hypersurface, Proposition \ref{prop:ss-ci} implies that $R = k[\lst y n]/(f_1,f_2)$, $d = \dim(R) = n-2$, and $\deg(f_i) = 2$.  Let $\x = (\lst x d)$ be an ideal generated by a homogeneous system of parameters. As the $h$-vector of $R$ is $(1,2,1)$, we have that the socle degree of $R/\x$ is 
		\[
			\text{SocDeg}(R/\x) = \deg(f_1)+\deg(f_2) + \sum \deg{x_j} - (2+d) = \sum \deg{x_j} -d +2.
		\]
	Thus for $i \gs \sum \deg{x_j} -d +2$, 
	\[
		\dim_k\left(\frac{R}{\x}\right)_i = 
		\begin{cases}
			1 & \text{if } i = \sum \deg{x_j} -d +2, \\
			0 & \text{if } i > \sum \deg{x_j} -d +2
		\end{cases}
	\]
	for any homogeneous system of parameters $\x$ of $R$.  Therefore $R$ is \SuS as well.
	\end{proof}

	\subsection{One Dimensional Rings}

	In an effort to show the one dimensional graded case of Conjecture \ref{conj:gor}, we apply our results to rings of dimension one.  In doing so, we obtain some nice ring structure.

		\begin{thm}\label{thm:1n1}
			Let $(R,\m,k)$ be a standard graded one-dimensional \CM ring with uncountable residue field $k$.  If $R$ is of graded countable \CM type, then $R$ is a hypersurface or of minimal multiplicity.
		\end{thm}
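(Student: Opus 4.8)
The plan is to argue case by case on the $h$-vector of $R$, which by Corollary~\ref{cor:countSS} is one of $(1)$, $(1,n)$, or $(1,n,1)$. If $h(R)=(1)$ then $e(R)=1$, so $R$ is regular, and a one-dimensional standard graded regular ring is $k[t]$, which is a hypersurface (and also of minimal multiplicity). If $h(R)=(1,n)$ then $R$ has minimal multiplicity by the equivalences recalled in Section~\ref{sec:prelim-def}. So the whole content of the theorem is in the case $h(R)=(1,n,1)$, where I would show that $R$ is forced to be a hypersurface when $n=1$ and that $n\ge 2$ cannot occur.

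Suppose first $h(R)=(1,1,1)$. Then $\dim_k(\m/\m^2)=h_R(1)=2$, so $R\cong k[u,v]/I$ for a homogeneous ideal $I$ of the polynomial ring $k[u,v]$. Since $R$ is \CMp\ of dimension one, $I$ is unmixed of height one; and since $k[u,v]$ is a unique factorization domain — so that every height-one primary ideal is a power of a principal prime, whence every unmixed height-one ideal is principal — $I$ is principal. Thus $R\cong k[u,v]/(f)$ with $\deg f=3$, a hypersurface.

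Now suppose $h(R)=(1,n,1)$ with $n\ge 2$; I claim this is impossible. Choose a homogeneous minimal reduction $x\in R_1$ and set $\ol R=R/xR$, an Artinian standard graded algebra with $h$-vector $(1,n,1)$. As $x$ is a nonzerodivisor, $R$ is Gorenstein if and only if $\ol R$ is, which holds if and only if the symmetric pairing $\ol R_1\times\ol R_1\to\ol R_2\cong k$ given by multiplication is nondegenerate. If this pairing is degenerate, then $\ol R$ has a nonzero socle element in degree one, and by \cite[Theorem~B]{eisenbud88} — in the graded, countable form already used in the proof of Proposition~\ref{prop:dim3-min-mult} — $R$ must have minimal multiplicity, contradicting $h(R)=(1,n,1)$ with $n\ge 2$. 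If the pairing is nondegenerate, then $R$ is Gorenstein, and with $h$-vector $(1,n,1)$, $n\ge 2$, it is neither a hypersurface nor of minimal multiplicity; this is precisely the one-dimensional graded case of Conjecture~\ref{conj:gor}, and I would derive a contradiction by producing uncountably many pairwise non-isomorphic indecomposable graded maximal \CM $R$-modules, which is excluded by Proposition~\ref{prop:countable-type}. When $n=2$ the ring $R$ has embedding codimension two and is Gorenstein, hence a complete intersection; being \SuS by Theorem~\ref{thm:countSS} it is stretched, so Proposition~\ref{prop:ss-ci} presents it as a complete intersection of two quadrics, and such a (singular) complete intersection of embedding codimension two carries an uncountable family of indecomposable graded maximal \CM modules, for instance distinguished by their support varieties. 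When $n\ge 3$ the ring $R$ is Gorenstein of embedding codimension at least three, hence not a complete intersection, and one again exhibits an uncountable family of indecomposable graded maximal \CM modules.

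The main obstacle is the Gorenstein subcase with $h$-vector $(1,n,1)$, $n\ge 2$: this is the one-dimensional graded Gorenstein conjecture, and this is where the real work lies. For $n=2$ it comes down to the (nontrivial but available) fact that a complete intersection of two quadrics is never of countable \CM type; for $n\ge 3$ one must deal with Gorenstein rings that are not complete intersections, where I expect the cleanest route is either to show the Betti numbers of $k$ over $R$ grow fast enough to force an uncountable family of indecomposable syzygy summands, or to adapt the module construction from the proof of Theorem~\ref{thm:countSS} to this setting. Assembling all the cases then gives that $R$ is a hypersurface or of minimal multiplicity.
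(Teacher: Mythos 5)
Your reduction to the case $h(R)=(1,n,1)$ is fine, and the cases $(1)$, $(1,n)$, and $(1,1,1)$ are handled correctly, but the heart of the theorem --- ruling out graded countable type when $R$ is neither a hypersurface nor of minimal multiplicity, in particular the Gorenstein rings with $h$-vector $(1,n,1)$, $n\gs 2$ --- is not proved. You say you ``would derive a contradiction by producing uncountably many pairwise non-isomorphic indecomposable graded maximal \CM modules'' and then list candidate strategies (support varieties for two quadrics, Betti number growth, adapting Theorem \ref{thm:countSS}), none of which is carried out; this is exactly the one-dimensional case of Conjecture \ref{conj:gor} that the theorem is meant to settle, so it cannot be deferred. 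Note also that adapting Theorem \ref{thm:countSS} cannot work as stated: its module construction starts from a \emph{failure} of super-stretchedness, and a ring with $h$-vector $(1,n,1)$ can perfectly well be \SuSp\ (Corollary \ref{cor:SShvector}, Corollary \ref{cor:ss-ci}), so that construction produces nothing in the decisive case. There is a secondary soft spot in your non-Gorenstein subcase: the appeal to \cite[Theorem B]{eisenbud88} in Proposition \ref{prop:dim3-min-mult} is made only after $R$ has been shown to be a normal domain, which uses $\dim R\gs 3$ together with the bound on the singular locus; in dimension one a ring of graded countable \CM type need not even be reduced, so you would have to check that the hypotheses of Theorem B are actually satisfied before invoking it.

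For comparison, the paper's proof makes no case division by $h$-vector or Gorensteinness. Assuming $R$ is neither a hypersurface nor of minimal multiplicity, it fixes a linear homogeneous minimal reduction $x$ and minimal generators $a,b$ of $\m$, linearly independent modulo $x$, with $a^2\notin x\m$ or $ab\notin x\m$, and considers the uncountable family of two-generated ideals $I_\alpha=(x,a+\alpha b)$, $\alpha\in k$. A direct computation with a degree-zero isomorphism $I_\alpha\simeq I_\beta$ shows that either $\alpha=\beta$ or $(\alpha,\beta)$ satisfies a fixed relation $1+\gamma(\alpha+\beta)+\sigma\alpha\beta=0$ with $\gamma,\sigma\in k$ determined by the ring; a fixed $\alpha$ can then be paired with only countably many $\beta$, since otherwise the polynomial would vanish identically, contradicting its nonzero constant term. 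Hence the $I_\alpha$ give uncountably many isomorphism classes in $\MCMgr R$, contradicting Proposition \ref{prop:countable-type}. This elementary, uniform construction is precisely the ingredient missing from your plan; as written, your proposal identifies the hard case but leaves it open.
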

		\begin{proof} 
			Suppose $R$ is not of minimal multiplicity and let $x \in \m$ be a homogeneous minimal reduction of the maximal ideal $\m$.  If we further assume $R$ is not a hypersurface, then we know that $h_{R/xR}(1) \gs 2$.	Further, since $R$ does not have minimal multiplicity, we know $x\m \not= \m^2$.  So let $a,b \in \m$ be linearly independent elements (modulo $x$) of a minimal generating set of $\m$ such that $a^2 \notin x\m$ or $ab \notin x\m$.  Notice that any ideal of the form $(x,a+\alpha b)$, where $\alpha \in k$, is a graded indecomposable maximal \CM module.  By Proposition \ref{prop:countable-type}, it is enough to show there are uncountably many such ideals up to isomorphism in $\MCMgr R$.  

		Consider the ideals $I_\alpha := (x,a + \alpha b)$ and $I_\beta := (x,a + \beta b)$ where $\alpha,\beta \in k$ and view them as objects in $\MCMgr R$.  Let $\varphi$ be an isomorphism between $I_\alpha$ and $I_\beta$ in $\MCMgr R$. As such, $\varphi$ is a degree zero map
			\[
				I_\alpha = (x,a + \alpha b) \stackrel{\varphi}{\simeq} (x,a + \beta b) = I_\beta
			\]
		given by
		\[
			\xymatrixrowsep{5mm}
			\xymatrixcolsep{10mm}
			\xymatrix
				{
					 x \ar@{|-{>}}[r] & d_1 x + d_2 (a + \beta b) \\
					 a + \alpha b \ar@{|-{>}}[r] & d_3 x + d_4 (a + \beta b).
				}
		\]
	Hence we see that the $d_i$'s are elements of $k$ for $i = 1,2,3,4$. Consider the relation  
		\[
			x\varphi(a + \alpha b) - (a + \alpha b)\varphi(x) = 0.
		\]
		Hence we have
		\begin{equation}\label{eqn:IdealRelation}
			d_3 x^2 + d_4 x (a + \beta b) - d_1 x (a + \alpha b) - d_2(a^2 +(\alpha + \beta)ab + \alpha\beta b^2) = 0.
		\end{equation}
		From here we can focus on $d_2$.  If $d_2 = 0$, then we have the relation
		\begin{equation}\label{eqn:IdealRelation-d=0}
			d_3 x^2 + d_4 x (a + \beta b) - d_1 x (a + \alpha b) =0.
		\end{equation}
	Since $x$ is a non-zero divisor, we can cancel $x$ and rearrange \eqref{eqn:IdealRelation-d=0} as a $k$-linear combination of $x,a,b$
		\[
			d_3 x + (d_4 -d_1)a   + (\beta d_4 - \alpha d_1)b = 0.
		\]
	As $x,a,b$ are independent over $k$, we have that the coefficients are zero.  In particular $d_4 - d_1 = 0$. Since $\varphi$ is an isomorphism, $d_1d_4-d_2d_3 \neq 0$, which forces $d_1 = d_4 \neq 0$. Hence $\beta d_4 - \alpha d_1 = 0$ implies that $\alpha = \beta$, and there are uncountably many ideals $I_\alpha$ up to isomorphism in $\MCMgr R$. A contradiction.

		If we assume that $d_2 \not= 0$, then \eqref{eqn:IdealRelation} modulo $x\m$, shows that
		\[
			a^2 +(\alpha + \beta)ab + \alpha\beta b^2 \equiv 0.
		\]
		If $a^2 \notin x\m$, notice that $R_2 = (a^2,xR_1)$.  Thus, there exist fixed $\gamma, \sigma \in k$ such that modulo $x\m$ we have
		\begin{align*}
			ab & \equiv \gamma a^2; \\
			b^2 &\equiv \sigma a^2.
		\end{align*}
		Therefore
		\begin{equation}\label{eqn:a2}
			a^2\cdot(1+\gamma(\alpha + \beta) + \sigma\alpha\beta) \equiv 0 \mod{x\m}.
		\end{equation}
		As $a^2$ is non-zero modulo $x\m$ and $1+\gamma(\alpha + \beta) + \sigma\alpha\beta$ is a degree zero element, the grading forces
		\begin{equation}\label{eqn:alpha-beta-relation}
			1+\gamma(\alpha + \beta) + \sigma\alpha\beta = 0
		\end{equation}
		in the field $k$. Choose $\alpha \in k$ such that the set $\Lambda_\alpha = \{ \beta \in k \vl I_\alpha \simeq I_\beta \}$ is uncountable.  By \eqref{eqn:alpha-beta-relation} we know that any pair $(\alpha', \beta') \in \Lambda_\alpha^2$ is a root of 
		\[
			f(X,Y) = 1+\gamma (X + Y) + \sigma XY \in k[X,Y].
		\]
		In particular, if we set $X = \alpha$, we see that $\{(\alpha,\beta)\}_{\beta\in\Lambda_\alpha}$ is an uncountable family of distinct roots of $f(X,Y)$.  This forces $f(X,Y)$ to be identically zero, a contradiction as $f(0,0) \neq 0$.

		Similarly, if $ab \notin x\m$ then there exists a fixed $\gamma', \sigma' \in k$ such that modulo $x\m$ we have
		\begin{align*}
			a^2 & \equiv \gamma' ab; \\
			b^2 &\equiv \sigma' ab.
		\end{align*}
		Therefore
		\[
			ab\cdot(\gamma'+(\alpha + \beta) + \sigma'\alpha\beta) \equiv 0 \mod{x\m}
		\]
		and we recover a similar contradiction as we did from Equation \eqref{eqn:a2}. 
		\end{proof}

		Applying this Theorem \ref{thm:1n1} to rings of graded countable \CM type brings to light some very useful structure.  In particular, if the ring is Gorenstein, then we have a hypersurface! We now have the graded, one-dimensional case of Conjecture \ref{conj:gor}.

	\begin{cor}\label{cor:1n1gor}
		Let $(R,\m,k)$ be a standard graded one dimensional Gorenstein ring with uncountable residue field $k$.  If $R$ is of graded countable \CM type, then $R$ is a hypersurface ring.
	\end{cor}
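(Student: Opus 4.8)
The plan is to reduce at once to Theorem~\ref{thm:1n1} and then eliminate the non-hypersurface possibility using the symmetry of Gorenstein Hilbert functions. By Theorem~\ref{thm:1n1}, the hypotheses force $R$ to be either a hypersurface, in which case there is nothing to prove, or a ring of minimal multiplicity. Thus it suffices to show that a standard graded one-dimensional Gorenstein ring $R$ of minimal multiplicity is a hypersurface.

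So suppose $R$ has minimal multiplicity; then its $h$-vector is $(1,n)$ for some $n \gs 1$. Pick a homogeneous linear system of parameters $x$; since $R$ is \CM of dimension one, $x$ is a nonzerodivisor and $\ol R = R/xR$ is an Artinian standard graded Gorenstein algebra with $h$-vector $(1,n)$. The Hilbert function of a graded Artinian Gorenstein algebra is symmetric, so the sequence $(1,n)$ must coincide with its reversal $(n,1)$; hence $n = 1$.

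Knowing the $h$-vector is $(1,1)$, the Hilbert series is $H_R(t) = (1+t)/(1-t)$, so $\dim_k(\m/\m^2) = h_R(1) = 2$; that is, $R$ has embedding dimension $2$. Writing $R \simeq k[x_1,x_2]/I$ for a homogeneous ideal $I$, the equality $\dim R = 1$ gives $\height I = 1$. Since $k[x_1,x_2]$ is a UFD and $R$ is \CM (so $I$ is unmixed of height one), $I$ is principal; alternatively, Auslander--Buchsbaum gives $\pd_{k[x_1,x_2]} R = 1$, and a cyclic module of projective dimension one over $k[x_1,x_2]$ has principal annihilator. In either case $R$ is a hypersurface.

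The argument is brief, and the one place needing any care is this last step---that a \CM (here in fact Gorenstein) codimension-one homogeneous quotient of a polynomial ring is cut out by a single equation; the rest is a direct application of Theorem~\ref{thm:1n1} together with the standard symmetry of Gorenstein Hilbert functions already invoked in the proof of Proposition~\ref{prop:ss-ci}.
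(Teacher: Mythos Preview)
Your proof is correct and follows essentially the same route as the paper: reduce via Theorem~\ref{thm:1n1} to the minimal multiplicity case, and then use the symmetry of the Gorenstein $h$-vector to force $n=1$ and hence a hypersurface. The only cosmetic difference is that the paper invokes Corollary~\ref{cor:countSS} to list the admissible $h$-vectors, whereas you argue the symmetry directly from minimal multiplicity; you also spell out explicitly why $h$-vector $(1,1)$ forces a hypersurface, which the paper leaves implicit.
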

	\begin{proof}
		By Theorem \ref{thm:1n1}, $R$ is either a hypersurface or of minimal multiplicity (or both).  By Corollary \ref{cor:countSS} and the fact that $R$ is Gorenstein, we know that the possible $h$-vectors are $(1)$, $(1,1)$, or $(1,n,1)$.  Thus if $R$ has minimal multiplicity, then $R$ must also be a hypersurface.  
	\end{proof}

\section{Acknowledgements}
	
	I would like to thank the anonymous referee for valuable feedback and suggestions regarding the content of this paper. Further, I would especially like to thank Craig Huneke for several useful conversations and inspiration for the project. I am also thankful to Alessandro De Stefani for helpful discussions during the preparation of this material.  Additionally, calculations in this note were inspired by many Macaulay2 \cite{M2} computations.


	
\def\cprime{$'$}

\end{document}